\newtheorem{theorem}{Theorem}[section]
\newtheorem{corollary}[theorem]{Corollary}
\newtheorem{lemma}[theorem]{Lemma}
\newtheorem{proposition}[theorem]{Proposition}
\newtheorem{remark}[theorem]{Remark}
\begin{document}

\title
{Provably Convergent Plug-and-play Proximal Block Coordinate Descent Method for Hyperspectral Anomaly Detection}

\author{Xiaoxia Liu\thanks{School of Mathematics, South China University of Technology, Guangzhou, Guangdong, 510641, China (Email: xiaoxia\_liu\_math@outlook.com).}  \quad \quad
		Shijie Yu\thanks{School of Mathematical Sciences, Shenzhen University, Shenzhen 518060, China; Department of Applied Mathematics, The Hong Kong Polytechnic University, Hong Kong SAR, China (Email: yushijie0404@163.com).
		}}

\date{}

\maketitle

\begin{abstract}
Hyperspectral anomaly detection refers to identifying pixels in the hyperspectral images that have spectral characteristics significantly different from the background. In this paper, we introduce a novel model that represents the background information using a low-rank representation. We integrate an implicit proximal denoiser prior, associated with a deep learning based denoiser, within a plug-and-play (PnP) framework to effectively remove noise from the eigenimages linked to the low-rank representation. Anomalies are characterized using a generalized group sparsity measure, denoted as $\|\cdot\|_{2,\psi}$. To solve the resulting orthogonal constrained nonconvex nonsmooth optimization problem, we develop a PnP-proximal block coordinate descent (PnP-PBCD) method, where the eigenimages are updated using a proximal denoiser within the PnP framework. We prove that any accumulation point of the sequence generated by the PnP-PBCD method is a stationary point.  We evaluate the effectiveness of the PnP-PBCD method on hyperspectral anomaly detection in scenarios with and without Gaussian noise contamination. The results demonstrate that the proposed method can effectively detect anomalous objects, outperforming the competing methods that may mistakenly identify noise as anomalies or misidentify the anomalous objects due to noise interference.
\end{abstract}

\noindent{\bf Keywords}:
Low-rank representation, proximal block coordinate descent, hyperspectral anomaly detection, plug-and-play.

\noindent{\bf  2000 Mathematics Subject Classification}:
L15A18, 68U10, 46N10.

\maketitle

\section{Introduction}\label{intro}

Hyperspectral anomaly detection aims to identify pixels or regions in hyperspectral images~(HSIs) that significantly differ from the surrounding background without prior knowledge of the target spectral information. These pixels, often referred to as anomalies, could represent objects or materials such as aircraft, ships, vehicles, or other structures that deviate from the natural background. Detecting such anomalies is crucial due to their significance in various applications. For example, in environmental monitoring, anomalies may indicate areas affected by pollution or disease in vegetation \cite{pour2021editorial}; in the food industry, anomalies may be detected for quality control by identifying physical defects and inconsistencies in products \cite{wu2013advanced}. By leveraging the rich spectral information provided by HSIs, the accuracy and reliability of anomaly detection can be enhanced, thereby improving decision-making processes in fields such as security, agriculture, and resource management.

In hyperspectral anomaly detection, the Reed-Xiaoli (RX) method, introduced by Reed and Xiaoli in 1990 \cite{reed1990adaptive}, is a foundational method known for its simplicity and widespread adoption. The RX method assumes that background spectral features follow a multivariate Gaussian distribution and identifies anomalies by calculating the Mahalanobis distance from the background. Over time, RX has inspired several variants to address its limitations in real-world applications. For example, the local RX method~\cite{molero2013analysis} enhances localized anomaly detection using sliding windows for background estimation; the kernel RX method~\cite{kwon2005kernel} maps data into high-dimensional feature spaces to better adapt to nonlinear distributions; and the weighted RX method~\cite{guo2014weighted} introduces pixel-level weighting for improving robustness against noise. While RX and its variants are computationally efficient and serve as benchmarks in the field, they often rely on Gaussian assumptions and are sensitive to noise and outliers, limiting their performance in complex scenes.

In contrast to statistical approaches like RX, representation-based methods focus on explicitly modeling the structure of HSIs without assuming a predefined distribution. Li et al.~\cite{li2015hyperspectral} proposed the background joint sparse representation detection (BJSRD) method, which reconstructs each background pixel using a sparse set of coefficients from a dictionary. Xu et al.~\cite{xu2015anomaly} introduced the low-rank and sparse representation (LRASR) method, which models the background as a low-rank component while representing anomalies as sparse components. Feng et al. \cite{feng2021local} developed the local spatial constraint and total variation~(LSC-TV) method, which combines low-rank modeling with superpixel segmentation and total variation~(TV) regularization to effectively separate anomalies in complex scenes. To preserve the intrinsic 3D structure of HSIs, the low-rank component is characterized using tenor low-rank representation.  For example, the tensor low-rank and sparse representation (TLRSR) method~\cite{wang2022learning} utilizes the tensor singular value decomposition~(t-SVD), while the method proposed in \cite{feng2023hyperspectral} employs the tensor ring decomposition.

Deep learning methods have significantly improved hyperspectral anomaly detection by extracting hierarchical features from high-dimensional data using deep neural networks. Among these, the Auto-AD method~\cite{wang2021auto}, a fully convolutional autoencoder, autonomously reconstructs the background and highlights anomalies through reconstruction errors, eliminating the need for manual parameter tuning or preprocessing. Other neural network models, such as stacked denoising autoencoders (SDAs)~\cite{zhao2018spectral} and spectral-constrained adversarial autoencoders (SC-AAE) \cite{xie2019spectral}, use manifold learning and adversarial strategies to enhance anomaly detection capabilities. These approaches are highly effective in nonlinear and complex environments but often require large datasets and significant computational resources, which can pose challenges for real-time applications.

In this paper, we develop a novel approach for hyperspectral anomaly detection that utilizes a representation-based technique for expressing the background, a deep learning denoiser for reducing noise contamination and a group sparsity measure for identifying anomalies. Our main contributions are summarized as follows:

\begin{itemize}
    \item We represent the background of HSIs in terms of a tensor mode-$3$ product of a learnable orthogonal basis as the subspace and a tensor formed by eigenimages as the representation coefficients.

    \item  We employ a deep learning denoiser in a plug-and-play~(PnP) fashion to eliminate the noise from the eigenimages. We enhance the existing relaxed proximal denoiser to its shifted version to denoise the eigenimages that may not fall within the pretrained range. The proposed denoiser can also be viewed as a proximal operator associated with a weakly convex function.
    
    \item We introduce a generalized group sparsity measure, $\|\cdot\|_{2,\psi}$, to detect sparse anomalous objects. The function $\psi$ is a sparsity-promoting function and can be chosen as a weakly convex function. 
    
    \item  We propose a PnP version of the proximal block coordinate descent algorithm, called the PnP-PBCD method, for solving the proposed nonconvex nonsmooth minimization problem with an orthogonal constraint.  The subproblems have either closed-form solutions or are easy to compute. We prove that any accumulation point of the sequence generated by the proposed algorithm is a stationary point. 
    
    \item We demonstrate that the proposed PnP-PBCD method outperforms other state-of-the-art methods in detecting anomalous objects in HSIs, even in the presence of noise. 
\end{itemize}

The rest of this paper is organized as follows. In section~\ref{sec:model}, we propose an optimization model for hyperspectral anomaly detection. To solve the proposed model, we propose a PnP-PBCD method in section~\ref{sec:algorithm} and conduct its convergence analysis in section~\ref{sec:convergence}. Next, we conduct experiments in section~\ref{sec:experiments}. The concluding remarks are given in section~\ref{sec:conclusions}.

\section{Optimization Model for Hyperspectral Anomaly Detection}\label{sec:model}

In this section, we propose an optimization model with an implicit deep prior for anomaly detection in noisy HSIs. We first introduce the notations that we use in this paper.

\subsection{Notations}
For a third order tensor $\mathcal{X} \in \mathbb{R}^{n_{1}\times n_{2}\times n_{3}}$, we let $x_{i_{1}i_{2}i_{3}}$ denote its $(i_{1},i_{2},i_{3})$-th entry,  let $x_{i_{1}i_{2}:}$ denote its $(i_{1},i_{2})$-th mode-3 fiber and  let $X_{::i_{3}}$ denote its $i_{3}$-th frontal slice. The mode-$k$ unfolding of a third order tensor $\mathcal{X}$ is denoted as
$\mathcal{X}_{(k)} = {\rm unfold}_{(k)}(\mathcal{X})$, which is the process to linearize all indexes except index $k$. The dimensions of $\mathcal{X}_{(k)}$ are $n_{k}\times \prod^{3}_{j=1,j\ne k}n_{j}$. An element $x_{i_{1}i_2i_{3}}$ of $\mathcal{X}$ corresponds to the position of $(i_{k},j)$ in matrix $\mathcal{X}_{(k)}$, where $j=1+\sum^{3}_{l=1,l\ne k}(i_{l}-1)\prod^{l-1}_{m=1, m\ne k}n_{m}$.  The inverse process of the mode-$k$ unfolding of a tensor $\mathcal{X}$ is denoted by $\mathcal{X} = {\rm fold}_{(k)}(\mathcal{X}_{(k)})$. 
In particular, the mode-$3$ product of a tensor $\mathcal{Z}\in \mathbb{R}^{n_{1}\times n_{2}\times r}$ and a matrix $Y\in \mathbb{R}^{n_3 \times r}$, denoted by $\mathcal{Z}\times_3 Y$, is a tensor $\mathcal{X}\in \mathbb{R}^{n_{1}\times n_{2}\times n_{3}}$ with entries 
$$x_{i_1i_2 j}=\sum_{i_3=1}^{r} z_{i_1i_2i_3}y_{ji_3}.$$
The expression $\mathcal{X}=\mathcal{Z}\times_{3} Y$ can also be written in terms of unfolding of tensors, i.e., $\mathcal{X}_{(3)}=Y\mathcal{Z}_{(3)}$.

For $f:\mathbb{R}^d\to (-\infty,+\infty]$ being a proper and lower semicontinuous function with a finite lower bound function. The function $f$ is $\mu$-strongly convex if $f-\frac{\mu}{2}\|\cdot\|^2$ is convex with $\mu\geq 0$; $f$ is $\rho$-weakly convex if $f+\frac{\rho}{2}\|\cdot\|^2$ is convex with $\rho\geq 0$. The proximal operator of $f$ with parameter $\lambda>0$ evaluated at $x\in \mathbb{R}^d$, denoted as $\text{prox}_{\lambda f}(x)$, is defined as
\[
\text{prox}_{\lambda f}(x): = \underset{u\in \mathbb{R}^d}{\arg\!\min} \left[f(u) + \frac{1}{2\lambda} \|u - x\|_2^2\right].
\]
Note that $\text{prox}_{\lambda f}$ is a set-valued map, when the minimizer is not unique. 

\subsection{Formulation for anomaly detection in noisy HSIs}

According to the high spectral correlation of HSIs, a clean HSI can be expressed in a low-rank tensor representation. Specifically, for  $\mathcal{L}\in \mathbb{R}^{n_1\times n_2\times n_3}$, where  $n_1$ and $n_2$ are the spatial dimensions, and $n_3$ is the spectral dimension, $\mathcal{L}$ can be represented as follows
\begin{equation*}
\mathcal{L}=\mathcal{Z}\times_3 E,
\end{equation*}
where $E \in \mathbb{R}^{n_3\times r }$ represents a basis of the spectral subspace, and the tensor $\mathcal{Z} \in \mathbb{R}^{n_1\times n_2\times r}$ denotes the representation coefficient of $\mathcal{L}$ with respect to $E$.  In particular, we choose $E$ as an orthogonal basis, that is, $E^{\top} E = {I}_r$ with ${I}_r$ denoting the identity matrix of size $r$. Additionally, each band of $\mathcal{Z}$, denoted as $Z_{::n}$, is called as an eigenimage, where $n=1,2,\dots,r$. Then a noisy HSI $\mathcal{O}\in \mathbb{R}^{n_{1}\times n_{2}\times n_{3}}$ can be formulated mathematically as 
\begin{displaymath}\label{1.2}
    \mathcal{O}=\mathcal{Z}\times_3 E+\mathcal{S}+\mathcal{N},
\end{displaymath}
where $\mathcal{S}\in \mathbb{R}^{n_1\times n_2\times n_3}$ represents the sparse components such as anomalous objects, and $\mathcal{N}\in \mathbb{R}^{n_1\times n_2\times n_3}$ represents Gaussian noise.

To remove Gaussian noise and detect anomalous objects simultaneously,  we propose an optimization model as follows
\begin{equation}\label{model:HSI2}
	\begin{split}
		\min_{\mathcal{Z},E,\mathcal{S}}\;&\frac{\delta}{2} \|\mathcal{Z}\times_3 E+\mathcal{S}-\mathcal{O}\|_{F}^2+\tau \|\mathcal{S}\|_{2,\psi}+\Phi_{\Sigma}(\mathcal{Z}),\\
		s.t.\; & E^{\top}E={I}_r,
	\end{split}
\end{equation}
where $\|\cdot\|_{2,\psi}$ represents a generalized group sparsity measure for detecting anomalous objects, $\Phi_{\Sigma}(\cdot)$ represents a proximal denoiser prior for removing Gaussian noise, $E$ is a learnable orthogonal basis, and $\delta,\tau>0$ are parameters. The resulting model is a nonconvex nonsmooth minimization problem with an orthogonal constraint. More details of this model will be provided in the following subsections.

\subsection{Generalized group sparsity measure} As the anomalous objects may show spectral information inconsistent with the nearby background, these objects can be viewed as sparse components $\mathcal{S}$. Grouping $\mathcal{S}$ along the spectral direction at spatial position $(i,j)$, we can measure the magnitude of $s_{ij:}$ using the $\ell_2$ norm, and measure the group sparsity using a sparsity-promoting function~\cite{shen2019structured} for $\psi$. The resulting generalized group sparsity measure can be formulated as follows
    \begin{equation} \label{eq:L2pNorm}
\|\mathcal{S}\|_{2,\psi}=\sum_{i=1}^{n_1}\sum_{j=1}^{n_2}\psi\left(\|s_{ij:}\|_{2}\right),
    \end{equation}
where $\psi:\mathbb{R}\to[0,+\infty)$ and $\|s_{ij:}\|_{2}=\left(\sum_{n=1}^{n_3}s^{2}_{ijn}\right)^{\frac{1}{2}}$. The following are some examples of sparsity-promoting functions for $\psi$:
\begin{enumerate}
    \item[(i)] $\ell_1$ norm: $\psi(t)=|t|$;
    \item[(ii)] Relaxed $\ell_p$ norm: $\psi(t)=(|t|+\varepsilon)^p-\varepsilon^p$, $p\in(0,1)$, $\varepsilon>0$;
    \item[(iii)] Minimax concave penalty (MCP)~\cite{10.1214/09-AOS729}: for $\theta >\lambda$, 
    $$\psi_{\lambda,\theta}(t)=\begin{cases}\lambda|t|-\frac{t^2}{2\theta},\quad& |t|\leq \theta \lambda;\\ \frac{\theta \lambda^2}{2}, &\mbox{otherwise};\end{cases}$$
    \item[(iv)] Smoothly clipped absolute deviation (SCAD)~\cite{fan1997comments}: for $\lambda>0$ and $\theta>2$,
    $$\psi_{\lambda,\theta}(t)=\begin{cases}\lambda|t|,\quad& |t|\leq \lambda;\\ 
    \frac{-t^2+2\theta\lambda|t|-\lambda^2}{2(\theta-1)}\quad& \lambda<|t|\leq  \theta\lambda;\\ 
    \frac{(\theta+1) \lambda^2}{2}, &\mbox{otherwise}.\end{cases}$$
\end{enumerate}
Note that (i) is convex, (ii) is $p\varepsilon^{p-1}$-weakly convex, (iii) is $\frac{1}{\theta}$-weakly convex, and (iv) is $\frac{1}{\theta-1}$-weakly convex, according to~\cite{bohm2021variable}.

\subsection{Proximal denoiser prior in a PnP framework}

If the observed HSI $\mathcal{O}$ is degraded by Gaussian noise, the components of its tensor decomposition will also contain some noise, i.e.,
\begin{equation}\label{eq:noisydecomposition}
    \mathcal{O}= \widetilde{\mathcal{Z}}\times_3 \widetilde{E},
\end{equation}
where $\widetilde{\mathcal{Z}}$ denotes the eigenimages degraded by noise, and $\widetilde{E}$ denotes the orthogonal basis with bias. For illustration, in Figure \ref{fig:eigenimages} we present some selected eigenimages of a noisy HSI with a noise level of $0.03$ using the subspace obtained by the HySime algorithm~\cite{bioucas2008hyperspectral}. As shown in Figure \ref{fig:eigenimages},  the first eigenimage is clean, while the noise level of the other eigenimages increases as the band index increases. To remove the noise in the HSI, we utilize a deep denoiser $\mathcal{D}_{\sigma}$ on each eigenimage with an adaptive noise level $\sigma$.
\begin{figure}[h]
	\centering
    \begin{subfigure}[t]{0.242\linewidth}
    \includegraphics[width=\linewidth]{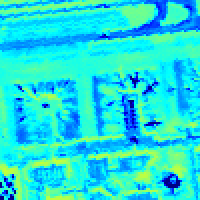}
    \caption{1st eigenimage}
    \end{subfigure}	
    \begin{subfigure}[t]{0.242\linewidth}
    \includegraphics[width=\linewidth]{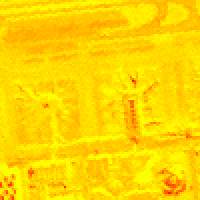}
    \caption{2nd eigenimage}
    \end{subfigure}
    \begin{subfigure}[t]{0.242\linewidth}
    \includegraphics[width=\linewidth]{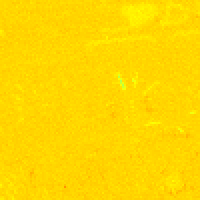}
    \caption{5th eigenimage}
    \end{subfigure}
    \begin{subfigure}[t]{0.242\linewidth}
    \includegraphics[width=\linewidth]{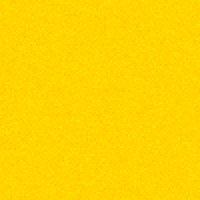}
    \caption{10th eigenimage}
    \end{subfigure}
	\caption{Illustration of eigenimages obtained from a noisy HSI.}\label{fig:eigenimages}
\end{figure}

The deep denoiser $\mathcal{D}_{\sigma}$ that we will use is a proximal denoiser proposed by Hurault et al. in~\cite{hurault2022proximal}, which has the form of a gradient descent step:
\begin{equation}\label{eq:Dsigma}
    \mathcal{D}_{\sigma}:=\text{Id}-\nabla g_{\sigma},
\end{equation}
where $g_{\sigma}$ denotes a smooth parameterized neural network. In particular, $g_{\sigma}$ is defined as follows
\begin{align*}\label{eq:gsigma}
    g_{\sigma}(X)=\frac{1}{2}\|X-\mathcal{N}_{\sigma}(X)\|^2,
\end{align*}
with $\mathcal{N}_{\sigma}(X)$ being a $\mathcal{C}^2$ neural network, specifically DRUNet~\cite{zhang2021plug}, pre-trained for denoising grayscale or color images. Moreover, the denoiser is carefully trained to ensure that $g_{\sigma}$ approximately has an $L_{g_{\sigma}}$-Lipschitz gradient with $L_{g_{\sigma}}<1$. The overall denoiser $\mathcal{D}_{\sigma}$ is called a proximal denoiser, because it behaves like a proximal operator as shown in Proposition~\ref{thm:prox}. More discussions on this proximal denoiser can also be found in~\cite{huang2024deep,Tan2024Provably,wu2024extrapolated}.
\begin{proposition}{(See \cite[Prop. 1]{Hurault2023Relaxed})}\label{thm:prox}
    Let $g_{\sigma}:\mathbb{R}^{n_1\times n_2}\to \mathbb{R}$ be a $\mathcal{C}^2$ function with $\nabla g_{\sigma}$ being $L_{g_{\sigma}}$-Lipschitz and $L_{g_{\sigma}}<1$. Then, for $\mathcal{D}_{\sigma}$ defined as in~\eqref{eq:Dsigma}, there exists a potential $\phi_{\sigma}:\mathbb{R}^{n_1\times n_2}\to [0,+\infty)$ such that $\operatorname{prox}_{\phi_{\sigma}}$ is one-to-one and
    \begin{equation*}
        \mathcal{D}_{\sigma}=\operatorname{prox}_{\phi_{\sigma}},
    \end{equation*}
    where 
    \begin{equation*}\label{eq:phisigma}
        \phi_{\sigma}(X)=\begin{cases}
            g_{\sigma}(\mathcal{D}^{-1}_{\sigma}(X))-\frac{1}{2}\|\mathcal{D}^{-1}_{\sigma}(X)-X\|^2,\quad & \mbox{if } X\in\operatorname{Im}(\mathcal{D}_{\sigma})\\
           +\infty,\quad & \mbox{otherwise}.
        \end{cases}
    \end{equation*}
    Moreover, $\phi_{\sigma}$ is $\frac{L_{g_{\sigma}}}{L_{g_{\sigma}}+1}$-weakly convex and $\phi_{\sigma}(X)\geq g_{\sigma}(X)$ for $\forall X\in \mathbb{R}^{n_1\times n_2}$.
\end{proposition}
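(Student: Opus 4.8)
The plan is to recognize $\mathcal{D}_\sigma$ as the gradient of a strongly convex function and then to exhibit $\phi_\sigma$ as an explicit Legendre-type transform of it. Set $h(X):=\tfrac12\|X\|^2-g_\sigma(X)$, so that $\mathcal{D}_\sigma=\mathrm{Id}-\nabla g_\sigma=\nabla h$. Since $\nabla g_\sigma$ is $L_{g_\sigma}$-Lipschitz with $L_{g_\sigma}<1$ and $g_\sigma$ is $\mathcal{C}^2$, the Hessian obeys $(1-L_{g_\sigma})I\preceq \nabla^2 h=I-\nabla^2 g_\sigma\preceq (1+L_{g_\sigma})I$, so $h$ is $(1-L_{g_\sigma})$-strongly convex and has $(1+L_{g_\sigma})$-Lipschitz gradient. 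Strong convexity makes $\nabla h$ strictly monotone, hence injective, and — being finite and supercoercive on all of $\mathbb{R}^{n_1\times n_2}$ — a bijection onto the whole space; this already shows that $\mathcal{D}_\sigma^{-1}$ is defined everywhere and that the map is one-to-one, and it explains why $\phi_\sigma$ will be finite everywhere (the ``$+\infty$'' branch is vacuous under these hypotheses).

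The core step is to verify $\mathcal{D}_\sigma(X)=\mathrm{prox}_{\phi_\sigma}(X)$, i.e.\ that $Y:=\mathcal{D}_\sigma(X)$ is the \emph{global} minimizer of $u\mapsto \phi_\sigma(u)+\tfrac12\|u-X\|^2$. Because $\nabla h$ is onto, I would substitute $u=\mathcal{D}_\sigma(W)=\nabla h(W)$, so $\mathcal{D}_\sigma^{-1}(u)=W$, turning the objective into a function $F(W)$. Inserting the definition of $\phi_\sigma$ and using $X-\mathcal{D}_\sigma(X)=\nabla g_\sigma(X)$, I expect the cancellations to yield the clean identity
\[
F(W)=\langle W-X,\nabla h(W)\rangle-h(W)+\tfrac12\|X\|^2 ,
\]
whence $\nabla F(W)=\nabla^2 h(W)(W-X)$ and $\langle \nabla F(W),W-X\rangle\ge(1-L_{g_\sigma})\|W-X\|^2$. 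Integrating $t\mapsto F(X+t(W-X))$ over $[0,1]$ then forces $F(W)>F(X)$ for $W\neq X$, so $W=X$ is the unique global minimizer and $\mathrm{prox}_{\phi_\sigma}(X)=\{\mathcal{D}_\sigma(X)\}$.

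To obtain the explicit formula, the weak-convexity modulus, and the lower bounds, I would pass to the convex conjugate. As $h$ is convex, lower semicontinuous and strongly convex, $(\nabla h)^{-1}=\nabla h^*$, and the Fenchel--Young equality $h^*(Y)=\langle X,Y\rangle-h(X)$ at $Y=\nabla h(X)$ rewrites the definition of $\phi_\sigma$ as $\phi_\sigma=h^*-\tfrac12\|\cdot\|^2$, which matches the stated expression after substituting $g_\sigma=\tfrac12\|\cdot\|^2-h$. Since $h$ has $(1+L_{g_\sigma})$-Lipschitz gradient, $h^*$ is $\tfrac{1}{1+L_{g_\sigma}}$-strongly convex, so $\phi_\sigma+\tfrac{L_{g_\sigma}}{2(1+L_{g_\sigma})}\|\cdot\|^2=h^*-\tfrac{1}{2(1+L_{g_\sigma})}\|\cdot\|^2$ is convex, giving the claimed $\tfrac{L_{g_\sigma}}{L_{g_\sigma}+1}$-weak convexity. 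Finally $\phi_\sigma(Y)-g_\sigma(Y)=h^*(Y)+h(Y)-\|Y\|^2\ge0$ by Fenchel--Young, so $\phi_\sigma\ge g_\sigma$; combined with $g_\sigma=\tfrac12\|\cdot-\mathcal{N}_\sigma(\cdot)\|^2\ge0$ this also yields $\phi_\sigma\ge0$, consistent with $\phi_\sigma$ mapping into $[0,+\infty)$.

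The main obstacle I anticipate is the \emph{global} optimality in the core step: because $\phi_\sigma$ is nonconvex, the convenient resolvent identity $\mathrm{prox}_{\phi_\sigma}=(\mathrm{Id}+\partial\phi_\sigma)^{-1}$ is not available, and a stationary point need not be a minimizer. The ray-monotonicity argument driven by the strong convexity of $h$ is precisely what upgrades $\nabla F(X)=0$ to a genuine global minimum, and keeping track of the two-sided Hessian bounds on $h$ (not just the lower one) is what pins down the exact weak-convexity constant through the conjugate.
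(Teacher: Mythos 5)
Your argument is correct, but there is nothing in the paper to compare it against: the paper states this proposition as an imported result, citing Prop.~1 of Hurault et al.\ (\texttt{Hurault2023Relaxed}), and gives no proof. Your proof is a valid self-contained derivation. The key observations all check out: with $h=\tfrac12\|\cdot\|^2-g_\sigma$ one has $\mathcal{D}_\sigma=\nabla h$ with $(1-L_{g_\sigma})I\preceq\nabla^2 h\preceq(1+L_{g_\sigma})I$, so $\nabla h$ is a bijection of $\mathbb{R}^{n_1\times n_2}$ and the $+\infty$ branch is vacuous; the reparametrization $u=\nabla h(W)$ does reduce the prox objective to $F(W)=\langle W-X,\nabla h(W)\rangle-h(W)+\tfrac12\|X\|^2$ (I verified the cancellation), and the ray-integration of $\langle\nabla F,W-X\rangle\ge(1-L_{g_\sigma})t\|W-X\|^2$ correctly upgrades the stationary point $W=X$ to the unique \emph{global} minimizer, which is the genuinely delicate point since $\phi_\sigma$ is nonconvex; and the identity $\phi_\sigma=h^*-\tfrac12\|\cdot\|^2$ via Fenchel--Young delivers both the exact weak-convexity modulus $\tfrac{L_{g_\sigma}}{L_{g_\sigma}+1}$ (from $\tfrac{1}{1+L_{g_\sigma}}$-strong convexity of $h^*$) and $\phi_\sigma\ge g_\sigma$. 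This is essentially the mechanism behind the cited result of Hurault et al., which rests on the Gribonval--Nikolova characterization of proximity operators as gradients of convex potentials; your version makes that machinery explicit rather than invoking it. The only point worth flagging is that $\phi_\sigma\ge 0$ and the codomain $[0,+\infty)$ require $g_\sigma\ge 0$, which is not part of the proposition's standalone hypotheses but follows from the paper's specific choice $g_\sigma=\tfrac12\|\cdot-\mathcal{N}_\sigma(\cdot)\|^2$ --- you correctly note this dependence.
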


To better adapt the proximal denoiser $\mathcal{D}_{\sigma}$ to denoise each eigenimage $Z_{::n}$, we first consider a relaxed version of the proximal denoiser discussed in~\cite{Hurault2023Relaxed} as follows
\begin{align*}
    \mathcal{D}_{\sigma}^{\gamma} = \gamma\mathcal{D}_{\sigma} + (1-\gamma) \operatorname{Id} = \operatorname{Id} -\gamma\nabla g_{\sigma},
\end{align*}
with parameter $\gamma\in[0,1]$. Then by applying Proposition~\ref{thm:prox} with $g^{\gamma}_{\sigma} = \gamma g_{\sigma}$, we get that there exists a $\frac{\gamma L_{g_{\sigma}}}{\gamma L_{g_{\sigma}}+1}$-weakly convex function $\phi_{\sigma}^{\gamma}$ such that $\mathcal{D}_{\sigma}^{\gamma} = \operatorname{prox}_{\phi_{\sigma}^{\gamma}}$ if $\gamma L_{g_{\sigma}} < 1$.  This allows us to control the weak convexity of the regularization function $\phi_{\sigma}^{\gamma}$, leading to a wide range for the selection of step size in the algorithm that we will propose in the next section.

Second, as eigenimages may not fall in $[0,1]$, we consider a shifted denoiser for an eigenimage $Z$ as follows
\begin{align}\label{eq:denoiser}
    \widetilde{\mathcal{D}}_{\sigma}^{\gamma}(Z) = \frac{1}{a}\left[\mathcal{D}_{\sigma}^{\gamma}(aZ+b)-b\right],
\end{align}
where $a>0$ and $b$ is a constant. Then
\begin{align*}
    \widetilde{\mathcal{D}}_{\sigma}^{\gamma}(Z) =\frac{1}{a}\left[ \operatorname{prox}_{\phi_{\sigma}^{\gamma}} (aZ+b)-b\right]=\operatorname{prox}_{\widetilde{\phi}_{\sigma}^{\gamma}}(Z),
\end{align*}
where $\widetilde{\phi}_{\sigma}^{\gamma}(Z)=\frac{1}{a^2}\phi_{\sigma}^{\gamma}(aZ+b)$ and $\widetilde{\phi}_{\sigma}^{\gamma}$ is $\frac{\gamma L_{g_{\sigma}}}{\gamma L_{g_{\sigma}}+1}$-weakly convex.

Lastly, we define the proximal denoiser prior $\Phi_{\Sigma}(\mathcal{Z})$ used in our proposed model~\eqref{model:HSI2} for removing noise in eigenimages as follows
\begin{equation}\label{eq:PhiSigma}
    \Phi_{\Sigma}(\mathcal{Z})=\lambda\sum_{n=1}^r\widetilde{\phi}_{\sigma_n}^{\gamma}({Z}_{::n}),
\end{equation}
where $\lambda>0$ and $\Sigma:=\operatorname{diag}(\sigma_1,\sigma_2,\dots,\sigma_{r})$ with $\sigma_n$ denoting the noise level of the $n$-th eigenimage ${Z}_{::n}$.

\subsection{Learnable orthogonal basis} The orthogonal constrained set on $E$ is also called the Stiefel manifold, defined as $\mathbb{S}_{n_3,r}:=\{E\in\mathbb{R}^{n_3\times r}:E^{\top}E=I_r\}$ with $ n_3\geq r$. By choosing an orthogonal basis $E$, the eigenimages ${Z}_{::n}$ are linearly independent to each other. This allows us to apply the denoiser to each eigenimage ${Z}_{::n}$ independently and the noise covariance matrix is a diagonal matrix.

In some existing works, some subspace methods consider a fixed basis for the low-rank tensor decomposition. However, 
according to~\eqref{eq:noisydecomposition}, this may result in false labels. Hence, we consider learnable basis $E$, which will be updated iteratively.


\section{Plug-and-play Proximal Block Coordinate Descent Method}\label{sec:algorithm}

In this section, we propose a PnP-PBCD method for solving model \eqref{model:HSI2}, which is a nonconvex and nonsmooth optimization problem over a Stiefel manifold. In particular, in model \eqref{model:HSI2}, both $\|\cdot\|_{2,\psi}$ and $\Phi_{\Sigma}$ are weakly convex functions. 

Let $F(\mathcal{Z},E,\mathcal{S})$ denote the objective function of the proposed model~\eqref{model:HSI2} as follows
	\begin{equation}\label{eq:DefF}
F(\mathcal{Z},E,\mathcal{S})=H(\mathcal{Z},E,\mathcal{S})+\tau \|\mathcal{S}\|_{2,\psi}+\Phi_{\Sigma}(\mathcal{Z}),
	\end{equation}
where
\begin{equation*}
    H(\mathcal{Z},E,\mathcal{S})=\frac{\delta}{2} \|\mathcal{Z}\times_3 E+\mathcal{S}-\mathcal{O}\|_{F}^2.
\end{equation*}
Then a PnP-PBCD algorithm for problem \eqref{model:HSI2} is summarized as follows:
\begin{align}
    \mathcal{S}^{k+1}&\in  \underset{\mathcal{S}}{\arg\!\min}\; H(\mathcal{Z}^k,E^k,\mathcal{S})+\tau \|\mathcal{S}\|_{2,\psi}+\frac{\alpha_{\mathcal{S}}}{2}\|\mathcal{S}-\mathcal{S}^k\|_F^2\label{eq:PBCD-S}\\
     E^{k+1}&\in\underset{E \in \mathbb{S}_{n_3,r}}{\arg\!\min}\; H(\mathcal{Z}^k,E,\mathcal{S}^{k+1})+\frac{\alpha_E}{2}\|E-E^k\|_F^2\label{eq:PBCD-E}\\
    \mathcal{Z}^{k+1}&=\underset{\mathcal{Z}}{\arg\!\min}\; H(\mathcal{Z},E^{k+1},\mathcal{S}^{k+1})+\Phi_{\Sigma}(\mathcal{Z})+\frac{\alpha_{\mathcal{Z}}}{2}\|\mathcal{Z}-\mathcal{Z}^k\|_F^2,\label{eq:PBCD-Z}
\end{align}
where the step sizes $\alpha_{\mathcal{S}},\alpha_{\mathcal{Z}}\geq 0$ and $\alpha_E>0$.

In the following, we present the details for computing each update. We will conduct a convergence analysis for the proposed PnP-PBCD algorithm in the next section.

 \subsection{The update of $\mathcal{S}$} Combining the function $H$ and the proximal term in~\eqref{eq:PBCD-S}, the update $\mathcal{S}^{k+1}$ can be written in terms of the proximal operator of $\|\cdot\|_{2,\psi}$ as follows
\begin{align}\label{eq:Supdate}
    \mathcal{S}^{k+1}\in  \operatorname{prox}_{\tilde{\tau}\|\cdot\|_{2,\psi}}\left[ \mathcal{S}^k-\tilde{\alpha}_{\mathcal{S}}\left(\mathcal{S}^k+\mathcal{Z}^k\times_3 E^k- \mathcal{O}\right)\right],
    \end{align}
 where $\tilde{\tau}=\frac{\tau}{\delta+\alpha_{\mathcal{S}}}$ and $\tilde{\alpha}_{\mathcal{S}}=\frac{\delta}{\delta+\alpha_{\mathcal{S}}}$. 
Since $\|\cdot\|_{2,\psi}$ is separable, the $(i,j)$-th mode-$3$ fiber of $\mathcal{S}^{k+1}$ can be computed via
\begin{equation*}
    s^{k+1}_{ij:}\in \operatorname{prox}_{\tilde{\tau}\psi\circ\|\cdot\|_2} (\hat{s}^k_{ij:}),
\end{equation*}
where $\hat{s}^k_{ij:}$ is the $(i,j)$-th mode-$3$ fiber of $\widehat{\mathcal{S}}^k=\mathcal{S}^k-\tilde{\alpha}_{\mathcal{S}}\left(\mathcal{S}^k+\mathcal{Z}^k\times_3 E^k- \mathcal{O}\right)$.
According to~\cite{liu2024orthogonal} and Theorem 4.1 in~\cite{yu2024generalized}, we have
\begin{align*}
\operatorname{prox}_{\tilde{\tau}\psi\circ\|\cdot\|_2} (s)=\begin{cases}
    \operatorname{prox}_{\tilde{\tau}\psi}(\|s\|_2)\frac{s}{\|s\|_2},\quad &\|s\|_2\neq 0\\
    0,\quad &\|s\|_2= 0.
\end{cases}
\end{align*}
Depending on the choice of $\psi$, the proximal operator $\operatorname{prox}_{\tilde{\tau}\psi\circ\|\cdot\|_2}$ is computed differently.

\subsection{The update of $E$} Before we compute the update of $E$ given in~\eqref{eq:PBCD-E}, we first introduce a lemma for the optimization problems over a Stiefel manifold as follows.
\begin{lemma}\label{thm:equiproj}
    Let $X\in\mathbb{R}^{n\times r}$, $A\in\mathbb{R}^{r\times m}$ and $B\in\mathbb{R}^{n\times m}$. Then the solutions of the following problems are the same:
   \begin{subequations}
   \begin{equation}\label{eq:H1}
            \min_{X\in \mathbb{S}_{n,r}} \frac{1}{2}\|XA-B\|_F^2,
        \end{equation}
    \begin{equation}\label{eq:H2}
            \min_{X\in \mathbb{S}_{n,r}} -\langle X,BA^{\top}\rangle,
        \end{equation}
        and
    \begin{equation}\label{eq:H3}
            \min_{X\in \mathbb{S}_{n,r}} \frac{1}{2}\|X-BA^{\top}\|_F^2.
        \end{equation}
        \end{subequations}

\end{lemma}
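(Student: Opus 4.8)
The plan is to show that the three objective functions differ only by additive constants when restricted to the Stiefel manifold $\mathbb{S}_{n,r}$, so that they share the same set of minimizers. The crucial observation is that the orthogonality constraint $X^{\top}X = I_r$ forces certain quadratic quantities to be constant on the feasible set, which is precisely what allows the three quadratic objectives to collapse onto the same linear functional.

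First I would expand the objective of \eqref{eq:H1} as $\frac{1}{2}\|XA-B\|_F^2 = \frac{1}{2}\|XA\|_F^2 - \langle XA, B\rangle + \frac{1}{2}\|B\|_F^2$. The key step is to use $\|XA\|_F^2 = \operatorname{tr}(A^{\top}X^{\top}XA) = \operatorname{tr}(A^{\top}A) = \|A\|_F^2$, which is independent of $X$ on $\mathbb{S}_{n,r}$. Combining this with the identity $\langle XA, B\rangle = \operatorname{tr}(A^{\top}X^{\top}B) = \operatorname{tr}(X^{\top}BA^{\top}) = \langle X, BA^{\top}\rangle$, obtained from the cyclic property of the trace, shows that the objective in \eqref{eq:H1} equals $-\langle X, BA^{\top}\rangle$ plus the constant $\frac{1}{2}\|A\|_F^2 + \frac{1}{2}\|B\|_F^2$, which establishes the equivalence of \eqref{eq:H1} and \eqref{eq:H2}.

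Next I would treat \eqref{eq:H3} in the same spirit. Expanding $\frac{1}{2}\|X - BA^{\top}\|_F^2 = \frac{1}{2}\|X\|_F^2 - \langle X, BA^{\top}\rangle + \frac{1}{2}\|BA^{\top}\|_F^2$ and invoking $\|X\|_F^2 = \operatorname{tr}(X^{\top}X) = \operatorname{tr}(I_r) = r$, I see that the objective of \eqref{eq:H3} equals $-\langle X, BA^{\top}\rangle$ plus the constant $\frac{r}{2} + \frac{1}{2}\|BA^{\top}\|_F^2$, so \eqref{eq:H3} is equivalent to \eqref{eq:H2} as well.

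Since adding a constant independent of $X$ does not change the set of minimizers over $\mathbb{S}_{n,r}$, the three problems have identical solution sets, which completes the argument. I do not anticipate a substantial obstacle here; the only point requiring care is correctly identifying which quadratic terms collapse to constants under the constraint, namely $\|XA\|_F^2$ and $\|X\|_F^2$, and this is exactly what the orthogonality of $X$ supplies.
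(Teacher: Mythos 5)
Your proof is correct and takes essentially the same route as the paper: expand the squared norms, use $X^{\top}X=I_r$ to reduce $\|XA\|_F^2$ to $\|A\|_F^2$ and $\|X\|_F^2$ to $r$, and conclude that the three objectives differ only by constants on $\mathbb{S}_{n,r}$. (Incidentally, your constant $\operatorname{tr}(I_r)=r$ is the correct one; the paper's displayed identity writes $r^2$ in that spot, which is a harmless typo since the term is constant either way.)
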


\begin{proof} If $X\in \mathbb{S}_{n,r}$, i.e., $X^{\top}X=I_r$, we have
        \begin{align}
        \|XA-B\|_F^2=&\|A\|_F^2-2\langle X,BA^{\top}\rangle+\|B\|_F^2\nonumber\\
        =&\|X-BA^{\top}\|_F^2-\|BA^{\top}\|_F^2-r^2+\|A\|_F^2+\|B\|_F^2.\label{eq:XAB}
    \end{align}
    Then the equivalence is immediately achieved.
\end{proof}

Recall that for unfolding of tensors 
\begin{equation*}
	\mathcal{L}=\mathcal{Z}\times_{3} E \quad \mbox{ if and only if }\quad \mathcal{L}_{(3)}=E\mathcal{Z}_{(3)}.
\end{equation*}
Then the function $H$ can be rewritten as $H(\mathcal{Z},E,\mathcal{S})=\frac{\delta}{2} \|E\mathcal{Z}_{(3)} +(\mathcal{S}-\mathcal{O})_{(3)}\|_{F}^2$, which has the same form as~\eqref{eq:H1}. It follows from Lemma~\ref{thm:equiproj} that minimizing \eqref{eq:H1} is equivalent to minimizing more simple forms as~\eqref{eq:H2} and \eqref{eq:H3}. In particular, we consider $\widetilde{H}(\mathcal{Z},E,\mathcal{S})=-\delta\langle E,(\mathcal{O}-\mathcal{S})_{(3)}(\mathcal{Z}_{(3)})^{\top}\rangle$ of the form as~\eqref{eq:H2}. Then the update of $E$  can be computed as follows
\begin{align}
E^{k+1}&\in\underset{E \in \mathbb{S}_{n_3,r}}{\arg\!\min}\; \widetilde{H}(\mathcal{Z}^k,E,\mathcal{S}^{k+1})+\frac{\alpha_E}{2}\|E-E^k\|_F^2,\label{Eq:Htilde}\\
&=\operatorname{Proj}_{\mathbb{S}_{n_3,r}}\left[E^k+\tilde{\alpha}_{E}(\mathcal{S}^{k+1}-\mathcal{O})_{(3)}(\mathcal{Z}_{(3)}^k)^{\top}\right],\nonumber
\end{align}
where $\tilde{\alpha}_{E}=\frac{\delta}{\alpha_E}$ and $\operatorname{Proj}_{\mathbb{S}_{n_3,r}}$ denotes the projection onto the Stiefel manifold $\mathbb{S}_{n_3,r}$. 
It follows from Lemma 3.1 in \cite{liu2024orthogonal} that the projection $\operatorname{Proj}_{\mathbb{S}_{n_3,r}}$ has a closed form and $E^{k+1}$ can be computed as follows 
\begin{align}\label{eq:Eupdate}
E^{k+1}=U^{k+1}(V^{k+1})^{\top}, \quad \mbox{with }U^{k+1}\widehat{\Sigma}^{k+1} (V^{k+1})^{\top}=\widehat{E}^k
\end{align}
where $\widehat{E}^k=E^k+\tilde{\alpha}_{E}(\mathcal{S}^{k+1}-\mathcal{O})_{(3)}(\mathcal{Z}_{(3)}^k)^{\top}$, $U^{k+1}\widehat{\Sigma}^{k+1} (V^{k+1})^{\top}$ is a reduced SVD of $\widehat{E}^k$, $U^{k+1}\in \mathbb{R}^{n_3\times r}$, $V^{k+1}\in \mathbb{R}^{r\times r}$, and $\widehat{\Sigma}^{k+1}\in \mathbb{R}^{r\times r}$.

\subsection{The update of $\mathcal{Z}$}

By applying $E^{\top}E=I_r$, we have
\begin{equation*}
    \|\mathcal{Z}\times_{3} E-\mathcal{L}\|_F^2=\|\mathcal{Z}-\mathcal{L}\times_{3} E^{\top}\|_F^2.
\end{equation*}
Then the subproblem for updating $\mathcal{Z}$ in \eqref{eq:PBCD-Z} can be reformulated as
\begin{align}
   \mathcal{Z}^{k+1}&\in  \underset{\mathcal{Z}}{\arg\!\min}\;\Phi_{\Sigma}(\mathcal{Z})+\frac{\delta}{2}\|\mathcal{Z}+(\mathcal{S}^{k+1}-\mathcal{O})\times_3 (E^{k+1})^{\top}\|_F^2+\frac{\alpha_{\mathcal{Z}}}{2}\|\mathcal{Z}-\mathcal{Z}^{k}\|_F^2,\nonumber\\
    &=\operatorname{prox}_{\tilde{\alpha}_{\mathcal{Z}}\Phi_{\Sigma}}\left[\mathcal{Z}^{k} -\tilde{\alpha}_{\mathcal{Z}}(\mathcal{Z}^{k} -(\mathcal{O}-\mathcal{S}^{k+1})\times_3 (E^{k+1})^{\top}\right],\label{eq:Zupdate}
    \end{align}
where $\tilde{\alpha}_{\mathcal{Z}}=\frac{\delta}{\delta+\alpha_{\mathcal{Z}}}$. By choosing the parameter $\lambda=\frac{1}{\tilde{\alpha}_{\mathcal{Z}}}$ in $\Phi_{\Sigma}$ defined in~\eqref{eq:PhiSigma}, we have $\tilde{\alpha}_{\mathcal{Z}}\Phi_{\Sigma}(\mathcal{Z})=\sum_{n=1}^r\widetilde{\phi}_{\sigma_n}^{\gamma}({Z}_{::n})$. Then each eigenimage, i.e., ${Z}^{k+1}_{::n}$ can be computed via the shifted and relaxed proximal denoiser as follows
\begin{equation*}
    {Z}^{k+1}_{::n}=\widetilde{\mathcal{D}}_{\sigma_n}^{\gamma}(\widehat{Z}^{k}_{::n}),
\end{equation*}
where $\widehat{\mathcal{Z}}^{k}=\mathcal{Z}^{k} -\tilde{\alpha}_{\mathcal{Z}}(\mathcal{Z}^{k} -(\mathcal{O}-\mathcal{S}^{k+1})\times_3 (E^{k+1})^{\top})$.

All in all, the proposed PnP-PBCD algorithm for model~\eqref{model:HSI2} is summarized in Algorithm~\ref{alg1}.
\begin{algorithm}[H]   
	\renewcommand{\algorithmicrequire}{\textbf{Input:}}
	\renewcommand{\algorithmicensure}{\textbf{Output:}}
	\caption{PnP-PBCD algorithm for model~\eqref{model:HSI2}}
	\label{alg1}
	\begin{algorithmic}[1]
		\STATE Initialize $(\mathcal{Z}^0,E^0,\mathcal{S}^0)$ with \((E^0)^{\top}E^0=I_r\);
		\STATE  Set parameters $\alpha_{\mathcal{S}},\alpha_{\mathcal{Z}}\geq 0$ and $\alpha_E>0$;
  \STATE Set $k=0$.
		\REPEAT
		\STATE Compute $\mathcal{S}^{k+1}$ by \eqref{eq:Supdate};
  \STATE Compute $E^{k+1}$ by \eqref{eq:Eupdate};
  \STATE Compute $\mathcal{Z}^{k+1}$ by \eqref{eq:Zupdate};
		\STATE $k\leftarrow k+1$.
		\UNTIL the stopping criterion is met.
		\ENSURE $(\mathcal{Z}^k,E^k,\mathcal{S}^k)$. \end{algorithmic}  
\end{algorithm}


\section{Convergence Analysis of the PnP-PBCD Method}\label{sec:convergence}

In this section, we conduct a convergence analysis on the proposed PnP-PBCD method. We first define the first-order optimality condition of problem~\eqref{model:HSI2} based on the subdifferentials for nonconvex nonsmooth functions and the Riemannian gradient of a smooth function on the Stiefel manifold. Then we prove that any accumulation point of the sequence generated by the PnP-PBCD method given in Algorithm~\ref{alg1} is a stationary point of problem~\eqref{model:HSI2}.

\subsection{First-order optimality condition}

First, we provide some preliminaries on subdifferentials and Riemannian gradients. 

Let $f:\mathbb{R}^d\to(-\infty,+\infty]$ be a proper and lower semicontinuous function with a finite lower bound. The (limiting) subdifferential of $f$ at $x\in \operatorname{dom}f:=\{x\in\mathbb{R}^{d}:f(x)<\infty\}$, denoted by $\partial f(x)$, is defined as
\begin{displaymath}
        \partial f(x)\!:=\{u \in\mathbb{R}^{d}\!: \!\exists x^{k}\to x, f(x^k)\to f(x)  \text{ and } u^{k}\to u \text{ with } u^{k}\in \hat{\partial}f(x^{k}) \mbox{ as }k\to \infty\},
\end{displaymath}
where $\hat{\partial}f(x)$ denotes the Fr\'echet subdifferential of $f$ at $x \in \operatorname{dom}f$, which is the set
of all $u \in \mathbb{R}^{d}$ satisfying
\begin{equation}
    \begin{split}
      \underset{y\ne x,y\rightarrow x}{\lim\inf}\frac{f(y)-f(x)-\langle u,y-x\rangle}{\|y-x\|}\ge 0.
    \end{split}
\end{equation}
One can also observe that $\{u \in\mathbb{R}^{d}: \exists x^{k}\to x, f(x^k)\to f(x)  \text{ and } u^{k}\to u \text{ with } u^{k}\in \partial f(x^{k}) \mbox{ as }k\to \infty\} \subseteq \partial f(x)$.

Also, we set $\mathcal{T}_X \mathbb{S}_{m,n}:=\{Y\in\mathbb{R}^{m\times n}: Y^{\top}X+X^{\top}Y=0\}$
as the tangent space of Stiefel manifold at $X\in\mathbb{R}^{m\times n}$. We also set the Riemannian metric on Stiefel manifold as the metric induced from the Euclidean inner product. Then according to \cite{2008Optimization}, the Riemannian gradient of  a smooth function $f$ at $X$ is given by
\begin{displaymath}
    \operatorname{grad} f(X) :=\operatorname{Proj}_{\mathcal{T}_X \mathbb{S}_{m,n}}(\nabla f(X)),
\end{displaymath}
where $\operatorname{Proj}_{\mathcal{T}_X \mathbb{S}_{m,n}}(Y):= Y  -  \frac{1}{2} X(X^{\top} Y  +  Y^{\top} X)$.

Second, we define the first-order optimality condition of the orthogonal constrained optimization problem as in \eqref{model:HSI2}. The point  \((\bar{\mathcal{Z}},\bar{E},\bar{\mathcal{S}})\) is a first-order stationary point of problem \eqref{model:HSI2} if 
\begin{subequations}
\begin{align}
&{0}\in \nabla_{\mathcal{S}}H(\bar{\mathcal{Z}},\bar{E},\bar{\mathcal{S}}) + \tau\partial\|\cdot\|_{2,\psi}(\bar{\mathcal{S}}),\label{eq:gradXH1}\\
&{0}=\operatorname{grad}_{E} H(\bar{\mathcal{Z}},\bar{E},\bar{\mathcal{S}}),\quad \bar{E}^{\top}\bar{E}={I}_{r},\label{eq:gradXH2}\\
&{0}\in \nabla_{\mathcal{Z}}H(\bar{\mathcal{Z}},\bar{E},\bar{\mathcal{S}})+\partial \Phi_{\Sigma} (\bar{\mathcal{Z}}),\label{eq:gradXH3}
\end{align}
\end{subequations}
where  $\operatorname{grad}_{E} H(\bar{\mathcal{Z}},\bar{E},\bar{\mathcal{S}})$ denotes the Riemannian gradient of $H$ with respect to $E$ evaluated at $(\bar{\mathcal{Z}},\bar{E},\bar{\mathcal{S}})$, and $\partial\|\cdot\|_{2,\psi}$ and $\partial \Phi_{\Sigma}$  denote the subdifferentials of $\|\cdot\|_{2,\psi}$ and $\Phi_{\Sigma}$, respectively.

\subsection{Subsequence convergence} We present some assumptions for problem~\eqref{model:HSI2} as follows:
\begin{itemize}
    \item[(A1)] $\tau\|\cdot\|_{2,\psi}$ is $\rho_1$-weakly convex. 

    \item[(A2)] $\Phi_{\Sigma}$ is coercive and $\rho_2$-weakly convex.

\end{itemize}
Note that the coercivity on $\Phi_{\Sigma}$ required in Assumption~(A2) can be achieved by the coercivity of $g_{\sigma}$, according to~\cite{Hurault2023Relaxed}.

We first show two lemmas that we will use in the convergence analysis. The first lemma is for analyzing the updates of $\mathcal{Z}$ and $\mathcal{S}$, and the second lemma is for understanding the update of $E$.

\begin{lemma}\label{thm:weakstrong}
Let $f:\mathbb{R}^d\to(-\infty,+\infty]$ be a proper, lower semicontinuous  and $\rho$-weakly convex function, and let  $h:\mathbb{R}^d\to \mathbb{R}$ be a differentiable and $\mu$-strongly convex function. If there exists $\hat{x}$ such that
    \begin{equation}\label{eq:updatehatx}
        \hat{x}{ \,\in \,}  \underset{x}{\arg\!\min}\;f(x)+h(x)+\frac{\alpha}{2}\|x-x_0\|^2,
    \end{equation}
   where $\alpha\geq 0$, then
    \begin{equation}\label{eq:Fx0}
    f(x_0)+h(x_0)\geq f(\hat{x})+h(\hat{x}) +\frac{\alpha+(\alpha+\mu-\rho)_+}{2}\|x_0-\hat{x}\|^2,
\end{equation}
with $(\alpha+\mu-\rho)_+=\max\{\alpha+\mu-\rho,0\}$.
\end{lemma}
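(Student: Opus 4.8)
The plan is to recognize the auxiliary objective in \eqref{eq:updatehatx} as a strongly convex function and then exploit the standard quadratic growth of a strongly convex function away from its minimizer. Define
\[
\Psi(x) := f(x) + h(x) + \frac{\alpha}{2}\|x - x_0\|^2,
\]
so that \eqref{eq:updatehatx} states precisely that $\hat{x}$ is a global minimizer of $\Psi$. Two evaluations are immediate and will be used at the end: since the proximal term vanishes at $x_0$, we have $\Psi(x_0) = f(x_0) + h(x_0)$, while $\Psi(\hat{x}) = f(\hat{x}) + h(\hat{x}) + \frac{\alpha}{2}\|\hat{x} - x_0\|^2$.

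The key step is to compute the modulus of strong convexity of $\Psi$. Using that $f$ is $\rho$-weakly convex (so $f + \frac{\rho}{2}\|\cdot\|^2$ is convex), that $h$ is $\mu$-strongly convex (so $h - \frac{\mu}{2}\|\cdot\|^2$ is convex), and expanding $\frac{\alpha}{2}\|x-x_0\|^2 = \frac{\alpha}{2}\|x\|^2 - \alpha\langle x, x_0\rangle + \frac{\alpha}{2}\|x_0\|^2$, I would show that
\[
\Psi(x) - \frac{\alpha+\mu-\rho}{2}\|x\|^2 = \Bigl[f(x) + \tfrac{\rho}{2}\|x\|^2\Bigr] + \Bigl[h(x) - \tfrac{\mu}{2}\|x\|^2\Bigr] - \alpha\langle x, x_0\rangle + \tfrac{\alpha}{2}\|x_0\|^2,
\]
which is a sum of two convex functions plus an affine remainder, hence convex. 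Therefore $\Psi$ is $(\alpha+\mu-\rho)$-strongly convex.

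With the modulus identified, I would split on the sign of $\alpha + \mu - \rho$, handled compactly by the positive part. If $\alpha + \mu - \rho \le 0$, then $(\alpha+\mu-\rho)_+ = 0$ and the claim reduces to $\Psi(x_0) \ge \Psi(\hat{x})$, which is just the minimality of $\hat{x}$. If $\alpha + \mu - \rho > 0$, then $\Psi$ is genuinely strongly convex, so its limiting subdifferential agrees with the convex subdifferential and Fermat's rule gives $0 \in \partial\Psi(\hat{x})$; the strong-convexity subgradient inequality evaluated at $x_0$ with this zero subgradient yields the quadratic growth bound $\Psi(x_0) \ge \Psi(\hat{x}) + \frac{\alpha+\mu-\rho}{2}\|x_0 - \hat{x}\|^2$. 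In either case, $\Psi(x_0) \ge \Psi(\hat{x}) + \frac{(\alpha+\mu-\rho)_+}{2}\|x_0 - \hat{x}\|^2$.

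Finally, I would substitute the two evaluations $\Psi(x_0) = f(x_0)+h(x_0)$ and $\Psi(\hat{x}) = f(\hat{x})+h(\hat{x}) + \frac{\alpha}{2}\|\hat{x} - x_0\|^2$ into this bound and merge the two quadratic terms, producing the coefficient $\frac{\alpha + (\alpha+\mu-\rho)_+}{2}$ and hence \eqref{eq:Fx0}. I expect the only delicate point to be the quadratic growth inequality in the strongly convex case: rather than invoking subdifferential calculus for the nonconvex sum directly, it is cleanest to note that when $\alpha+\mu-\rho>0$ the function $\Psi$ is actually convex, so all the convex-analytic facts (Fermat's rule and the strong-convexity inequality) apply verbatim, while the weakly convex regime is absorbed harmlessly by the positive part.
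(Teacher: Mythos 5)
Your proposal is correct: the strong-convexity modulus computation for $\Psi=f+h+\frac{\alpha}{2}\|\cdot-x_0\|^2$ is right, the quadratic-growth bound at the global minimizer is valid in the case $\alpha+\mu-\rho>0$ (where $\Psi$ is genuinely convex, so Fermat's rule and the strong-convexity inequality apply without any nonsmooth calculus), the case $\alpha+\mu-\rho\le 0$ is correctly absorbed by plain minimality, and unpacking $\Psi(\hat{x})=f(\hat{x})+h(\hat{x})+\frac{\alpha}{2}\|\hat{x}-x_0\|^2$ recovers exactly the coefficient $\frac{\alpha+(\alpha+\mu-\rho)_+}{2}$. The paper reaches the same two bounds by a closely parallel but technically different route: it writes the subgradient inequality for the $\rho$-weakly convex $f$ at $\hat{x}$ (citing Lemma 2.1 of Davis--Drusvyatskiy) and the gradient inequality for the $\mu$-strongly convex $h$, sums them, and substitutes the specific subgradient $u=-\nabla h(\hat{x})+\alpha(x_0-\hat{x})\in\partial f(\hat{x})$ supplied by the first-order optimality condition, obtaining the modulus $2\alpha+\mu-\rho$; it then takes the maximum with the modulus $\alpha$ coming from bare minimality. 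Your aggregation into a single strongly convex objective buys a slightly cleaner argument — it avoids invoking the sum rule for limiting subdifferentials and the weakly-convex subgradient inequality altogether, since the only case requiring more than minimality is one where the aggregate objective is convex — while the paper's pointwise version makes the role of the stationarity condition at $\hat{x}$ explicit, which mirrors how the lemma is later applied in the convergence analysis. Both yield identical constants.
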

\begin{proof} 

For the $\rho$-weakly convex function $f$, it follows from Lemma 2.1 in \cite{davis2019stochastic} that
$$
f(x_0)\geq f(\hat{x})+\langle u,x_0-\hat{x}\rangle- \frac{\rho}{2}\|x_0-\hat{x}\|^2,
$$
 for $\forall u\in \partial f(\hat{x})$.  Similarly, for the differentiable and $\mu$-strongly convex function $h$, it follows from \cite{Pol66} that
  $$
h(x_0)\geq h(\hat{x})+ \langle \nabla h(\hat{x}),x_0-\hat{x}\rangle + \frac{\mu}{2}\|x_0-\hat{x}\|^2.
$$  
Summing the inequalities above, we obtain
\begin{equation*}
     f(x_0)+h(x_0)\geq f(\hat{x})+h(\hat{x})+\langle u+\nabla h(\hat{x}),x_0-\hat{x}\rangle + \frac{\mu-\rho}{2}\|x_0-\hat{x}\|^2.
\end{equation*}
By the update of $\hat{x}$ in~\eqref{eq:updatehatx}, we have $0\in \partial f(\hat{x})+\nabla h(\hat{x})+\alpha(\hat{x}-x_0)$. That is, $-\nabla h(\hat{x})+\alpha(x_0-\hat{x})\in \partial f(\hat{x})$. Substituting $u=-\nabla h(\hat{x})+\alpha(x_0-\hat{x})$ into the above inequality, we obtain 
    \begin{equation*}
	f(x_0)+h(x_0)\geq f(\hat{x})+h(\hat{x}) +\frac{2\alpha+\mu-\rho}{2}\|x_0-\hat{x}\|^2.
\end{equation*}
Note that by \eqref{eq:updatehatx}, we also have
    \begin{equation*}
	f(x_0)+h(x_0)\geq f(\hat{x})+h(\hat{x}) +\frac{\alpha}{2}\|x_0-\hat{x}\|^2.
\end{equation*}
Hence, taking the maximum of $2\alpha+\mu-\rho$ and $\alpha$, we obtain \eqref{eq:Fx0}.
\end{proof}

Note that for $\alpha\ge 0$, it can be verified that $\alpha+(\alpha+\mu-\rho)_+>0$ if and only if $\alpha>0$ or $\mu-\rho>0$.

\begin{proposition}\label{thm:gradH} Let $X\in\mathbb{R}^{n\times r}$, $A\in\mathbb{R}^{r\times m}$ and $B\in\mathbb{R}^{n\times m}$. Meanwhile, let $H_1(X)$, $H_2(X)$, and $H_3(X)$ denote the objective functions of~\eqref{eq:H1}, \eqref{eq:H2}, and \eqref{eq:H3}, respectively. Then we have for any $X\in\mathbb{S}_{n,r}$,
\begin{equation}\label{eq:grad=}
    \operatorname{grad} H_1(X)=\operatorname{grad} H_2(X)=\operatorname{grad} H_3(X).
\end{equation}

\end{proposition}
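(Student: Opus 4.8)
The plan is to reduce the claimed equality of the three Riemannian gradients to a single observation: the pairwise differences of the corresponding Euclidean gradients lie in the normal space of the Stiefel manifold at $X$, and are therefore annihilated by the tangent-space projection $\operatorname{Proj}_{\mathcal{T}_X\mathbb{S}_{n,r}}$. In this sense the statement upgrades the purely algebraic equivalence of the objectives established in Lemma~\ref{thm:equiproj} to an equivalence of their Riemannian gradients.

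First I would compute the three Euclidean gradients directly. With $H_1(X)=\tfrac12\|XA-B\|_F^2$, $H_2(X)=-\langle X,BA^\top\rangle$ and $H_3(X)=\tfrac12\|X-BA^\top\|_F^2$, differentiation yields
\[
\nabla H_1(X)=XAA^\top-BA^\top,\qquad \nabla H_2(X)=-BA^\top,\qquad \nabla H_3(X)=X-BA^\top.
\]
Subtracting, the differences are
\[
\nabla H_1(X)-\nabla H_2(X)=X(AA^\top),\qquad \nabla H_3(X)-\nabla H_2(X)=X=XI_r,
\]
each of the form $XS$ with $S\in\mathbb{R}^{r\times r}$ symmetric, since both $AA^\top$ and $I_r$ are symmetric.

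Next I would prove the key claim: for $X\in\mathbb{S}_{n,r}$ and any symmetric $S$, one has $\operatorname{Proj}_{\mathcal{T}_X\mathbb{S}_{n,r}}(XS)=0$. Applying the definition $\operatorname{Proj}_{\mathcal{T}_X}(Y)=Y-\tfrac12 X(X^\top Y+Y^\top X)$ with $Y=XS$ and using the constraint $X^\top X=I_r$ gives $X^\top Y=X^\top X S=S$ and $Y^\top X=S^\top X^\top X=S^\top=S$, so that
\[
\operatorname{Proj}_{\mathcal{T}_X}(XS)=XS-\tfrac12 X(S+S)=XS-XS=0.
\]
This is precisely the statement that $\{XS:S=S^\top\}$ is the normal space of $\mathbb{S}_{n,r}$ at $X$.

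Finally, since $\operatorname{Proj}_{\mathcal{T}_X}$ is a linear map and $\operatorname{grad} H_i(X)=\operatorname{Proj}_{\mathcal{T}_X}(\nabla H_i(X))$, linearity together with the previous step yields $\operatorname{grad} H_1(X)-\operatorname{grad} H_2(X)=\operatorname{Proj}_{\mathcal{T}_X}(X(AA^\top))=0$ and $\operatorname{grad} H_3(X)-\operatorname{grad} H_2(X)=\operatorname{Proj}_{\mathcal{T}_X}(XI_r)=0$, giving the full chain of equalities in~\eqref{eq:grad=}. The computation itself is routine; the only real content is the conceptual crux, namely recognizing that the Euclidean gradients of the three objectives differ only by terms in the normal space, so that projection onto $\mathcal{T}_X\mathbb{S}_{n,r}$ collapses them to a common value. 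I do not anticipate a genuine obstacle beyond carrying out this identification carefully.
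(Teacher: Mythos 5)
Your proof is correct and follows essentially the same route as the paper: compute the three Euclidean gradients, observe that they differ by $XAA^{\top}$ and $X$ respectively, and use linearity of $\operatorname{Proj}_{\mathcal{T}_X\mathbb{S}_{n,r}}$ together with the fact that this projection annihilates such terms. Your phrasing of the key step as ``$XS$ with $S$ symmetric lies in the normal space'' is a slightly more general packaging of the same computation the paper carries out for the two specific cases $S=AA^{\top}$ and $S=I_r$.
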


\begin{proof}
    We first compute the gradients of $H_1$, $H_2$, and $H_3$ as follows
    \begin{align*}
        \nabla H_1 (X)&= (XA-B)A^{\top}=XAA^{\top}+\nabla H_2\\
        \nabla H_2 (X)&= -BA^{\top}\\
        \nabla H_3 (X)&= X-BA^{\top}=X+\nabla H_2.\
    \end{align*}
Then by projecting the gradients above onto the tangent space of Stiefel manifold at $X$, we can compute the Riemannian gradients of $H_1$, $H_2$, and $H_3$. Since $\operatorname{Proj}_{\mathcal{T}_{X}\mathbb{S}_{n,r}}$ is linear, we have
\begin{align*}
    \operatorname{grad} H_1(X)&=\operatorname{Proj}_{\mathcal{T}_{X}\mathbb{S}_{n,r}}(XAA^{\top})+\operatorname{grad} H_2(X)
\end{align*}
and
\begin{align*}
    \operatorname{grad} H_3(X)&=\operatorname{Proj}_{\mathcal{T}_{X}\mathbb{S}_{n,r}}(X)+\operatorname{grad} H_2(X).
\end{align*}
It is easy to verify that for any $X\in\mathbb{S}_{n,r}$,
$$\operatorname{Proj}_{\mathcal{T}_{X}\mathbb{S}_{n,r}}(XAA^{\top})=XAA^{\top}- \frac{1}{2} X(X^{\top} XAA^{\top}  +  (XAA^{\top})^{\top} X)=0$$ and $\operatorname{Proj}_{\mathcal{T}_{X}\mathbb{S}_{n,r}}(X)=X- \frac{1}{2} X(X^{\top} X  +  X^{\top} X)=0$. Therefore, we obtain~\eqref{eq:grad=}.
\end{proof}

Next, we prove the non-increasing monotonicity of the objective sequence\\ $\{F(\mathcal{Z}^k,E^k,\mathcal{S}^k)\}$ and the boundedness of the sequence $\{(\mathcal{Z}^k,E^k,\mathcal{S}^k)\}$ generated by Algorithm~\ref{alg1}.

\begin{theorem}\label{Thm:IterateSeq} Assume that assumptions (A1) and (A2) are satisfied. Let $F(\mathcal{Z},E,\mathcal{S})$ be the objective function of model~\eqref{model:HSI2} defined in \eqref{eq:DefF} and
let $\{(\mathcal{Z}^k,E^k,\mathcal{S}^k)\}$ be the sequence generated by Algorithm~\ref{alg1} with $\alpha_{\mathcal{S}}+(\alpha_{\mathcal{S}}+\delta-\rho_1)_+>0$ and $\alpha_{\mathcal{Z}}+(\alpha_{\mathcal{Z}}+\delta-\rho_2)_+>0$.
Then the following statements hold: 
\begin{enumerate}
    \item[(i)]  The sequence $\{F(\mathcal{Z}^k,E^k,\mathcal{S}^k)\}$ of function values at the iteration points decreases monotonically, and
\begin{equation}\label{Eq:Decrease}
    \begin{split}
&F(\mathcal{Z}^{k-1},E^{k-1},\mathcal{S}^{k-1})-F(\mathcal{Z}^{k},E^{k},\mathcal{S}^{k})\\
\ge &\frac{c_1}{2}\left(\|\mathcal{S}^{k}-\mathcal{S}^{k-1}\|^{2}_{F}+\|E^{k}-E^{k-1}\|^{2}_{F}+\|\mathcal{Z}^{k}-\mathcal{Z}^{k-1}\|^{2}_{F}\right),
    \end{split}
\end{equation}
for some $c_1>0$.
\item[(ii)] The sequence $\{(\mathcal{Z}^k,E^k,\mathcal{S}^k)\}$ is bounded.

\item[(iii)] $\displaystyle\lim_{k\to\infty} \|\mathcal{S}^{k}-\mathcal{S}^{k-1}\|_{F}=0$, $\displaystyle\lim_{k\to\infty} \|E^k-E^{k-1} \|_{F}=0$, and $\displaystyle\lim_{k\to\infty}\|\mathcal{Z}^{k}-\mathcal{Z}^{k-1}\|_{F}=0$, for any $i=1,2,3$.
\end{enumerate}
\end{theorem}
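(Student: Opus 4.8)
The plan is to establish a per-block sufficient-decrease inequality for each of the three updates in Algorithm~\ref{alg1} and then chain them across one full sweep $(\mathcal{Z}^{k-1},E^{k-1},\mathcal{S}^{k-1})\to(\mathcal{Z}^{k},E^{k},\mathcal{S}^{k})$, so that the intermediate function values cancel and only the squared successive differences survive. The engine for the $\mathcal{S}$- and $\mathcal{Z}$-blocks is Lemma~\ref{thm:weakstrong}: in both subproblems the smooth part $H$ is $\delta$-strongly convex in the active variable, so I take $\mu=\delta$. For the $\mathcal{S}$-block I apply the lemma with $f=\tau\|\cdot\|_{2,\psi}$ ($\rho_1$-weakly convex by (A1)), $h=H(\mathcal{Z}^{k-1},E^{k-1},\cdot)$, $\alpha=\alpha_{\mathcal{S}}$, $x_0=\mathcal{S}^{k-1}$, $\hat{x}=\mathcal{S}^{k}$, yielding a drop of $\frac{a_{\mathcal{S}}}{2}\|\mathcal{S}^{k}-\mathcal{S}^{k-1}\|_F^2$ in $H+\tau\|\cdot\|_{2,\psi}$ with $a_{\mathcal{S}}=\alpha_{\mathcal{S}}+(\alpha_{\mathcal{S}}+\delta-\rho_1)_+$. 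Symmetrically, for the $\mathcal{Z}$-block I apply the lemma with $f=\Phi_{\Sigma}$ ($\rho_2$-weakly convex by (A2)) and $\alpha=\alpha_{\mathcal{Z}}$, obtaining a drop of $\frac{a_{\mathcal{Z}}}{2}\|\mathcal{Z}^{k}-\mathcal{Z}^{k-1}\|_F^2$ with $a_{\mathcal{Z}}=\alpha_{\mathcal{Z}}+(\alpha_{\mathcal{Z}}+\delta-\rho_2)_+$. The two step-size hypotheses are exactly what force $a_{\mathcal{S}},a_{\mathcal{Z}}>0$.

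One preliminary I must check is the $\delta$-strong convexity of $H$ in each non-manifold block. In $\mathcal{S}$ this is immediate, since $H=\frac{\delta}{2}\|\mathcal{S}-(\mathcal{O}-\mathcal{Z}^{k-1}\times_3 E^{k-1})\|_F^2$ has Hessian $\delta\,\mathrm{Id}$. In $\mathcal{Z}$ I first use $(E^{k})^{\top}E^{k}=I_r$ to rewrite $H(\cdot,E^{k},\mathcal{S}^{k})$, up to a $\mathcal{Z}$-independent constant, as $\frac{\delta}{2}\|\mathcal{Z}+(\mathcal{S}^{k}-\mathcal{O})\times_3(E^{k})^{\top}\|_F^2$ (the reformulation already used for~\eqref{eq:Zupdate}), which again has Hessian $\delta\,\mathrm{Id}$. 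The $E$-block needs no convexity and is handled directly: since $E^{k}$ globally minimizes $H(\mathcal{Z}^{k-1},\cdot,\mathcal{S}^{k})+\frac{\alpha_E}{2}\|\cdot-E^{k-1}\|_F^2$ over the Stiefel manifold, comparing its value at $E^{k}$ with that at $E^{k-1}$ gives $H(\mathcal{Z}^{k-1},E^{k-1},\mathcal{S}^{k})\ge H(\mathcal{Z}^{k-1},E^{k},\mathcal{S}^{k})+\frac{\alpha_E}{2}\|E^{k}-E^{k-1}\|_F^2$. Adding the three inequalities and cancelling the shared intermediate terms collapses the left side to $F(\mathcal{Z}^{k-1},E^{k-1},\mathcal{S}^{k-1})-F(\mathcal{Z}^{k},E^{k},\mathcal{S}^{k})$, proving~\eqref{Eq:Decrease} with $c_1=\min\{a_{\mathcal{S}},\alpha_E,a_{\mathcal{Z}}\}>0$.

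For part (ii), I first note $F\ge 0$, because $H$, $\tau\|\cdot\|_{2,\psi}$, and $\Phi_{\Sigma}$ are each nonnegative ($\psi\ge 0$ and $\widetilde{\phi}_{\sigma}^{\gamma}\ge 0$ via Proposition~\ref{thm:prox}); hence the monotone sequence $\{F^k\}$ converges and $F^k\le F^0$ for all $k$. Boundedness of $\{E^k\}$ is free from compactness of the Stiefel manifold ($\|E^k\|_F=\sqrt{r}$). From $\Phi_{\Sigma}(\mathcal{Z}^k)\le F^k\le F^0$ and the coercivity in (A2), $\{\mathcal{Z}^k\}$ is bounded. The one genuinely nontrivial point is $\{\mathcal{S}^k\}$: because bounded penalties such as MCP and SCAD make $\|\cdot\|_{2,\psi}$ non-coercive, I cannot bound $\mathcal{S}^k$ through the sparsity term. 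Instead I use $\frac{\delta}{2}\|\mathcal{Z}^k\times_3 E^k+\mathcal{S}^k-\mathcal{O}\|_F^2=H\le F^0$ together with $\|\mathcal{Z}^k\times_3 E^k\|_F=\|\mathcal{Z}^k\|_F$ (as $E^k$ has orthonormal columns) and the triangle inequality to bound $\|\mathcal{S}^k\|_F$.

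Part (iii) then follows by summing~\eqref{Eq:Decrease} over $k=1,\dots,N$: the left side telescopes to $F^0-F^N\le F^0<\infty$, so the nonnegative series $\sum_k\big(\|\mathcal{S}^k-\mathcal{S}^{k-1}\|_F^2+\|E^k-E^{k-1}\|_F^2+\|\mathcal{Z}^k-\mathcal{Z}^{k-1}\|_F^2\big)$ converges, forcing each summand, hence each of the three successive differences, to zero. I expect the only place calling for real care to be the boundedness of $\{\mathcal{S}^k\}$ in part (ii), precisely because the group-sparsity term is not coercive; everything else is disciplined bookkeeping of the three descent inequalities and the verification that $H$ is $\delta$-strongly convex in each non-manifold block.
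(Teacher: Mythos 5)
Your proof is correct, and for parts (i) and (iii) it follows essentially the same route as the paper: Lemma~\ref{thm:weakstrong} with $\mu=\delta$ for the $\mathcal{S}$- and $\mathcal{Z}$-blocks (using orthogonality of $E^k$ to get $\delta$-strong convexity of $H$ in $\mathcal{Z}$), a direct comparison of objective values at $E^k$ versus $E^{k-1}$ for the manifold block (the paper phrases this through $\widetilde{H}$ and \eqref{eq:XAB}, but on $\mathbb{S}_{n_3,r}$ the two objectives differ by a constant, so the arguments coincide), then chaining and telescoping.

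The genuine divergence is in part (ii), and your version is the more careful one. The paper bounds $\{\mathcal{S}^k\}$ by asserting that $\|\cdot\|_{2,\psi}$ is coercive, but coercivity of the group-sparsity term is not among the stated assumptions (A1)--(A2), and it actually fails for the bounded penalties MCP and SCAD listed as admissible choices of $\psi$. Your alternative --- extracting $\frac{\delta}{2}\|\mathcal{Z}^k\times_3E^k+\mathcal{S}^k-\mathcal{O}\|_F^2\le F(\mathcal{Z}^0,E^0,\mathcal{S}^0)$ from the monotone, nonnegative objective, noting $\|\mathcal{Z}^k\times_3E^k\|_F=\|\mathcal{Z}^k\|_F$ by column-orthonormality of $E^k$, and then bounding $\|\mathcal{S}^k\|_F$ by the triangle inequality once $\{\mathcal{Z}^k\}$ is controlled through the coercivity of $\Phi_{\Sigma}$ --- uses only the stated hypotheses and therefore covers all the sparsity-promoting functions the paper proposes. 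This buys a proof of (ii) valid for the full advertised generality of $\psi$, at the cost of one extra (elementary) estimate; the paper's shortcut is only valid if one additionally restricts $\psi$ to coercive choices such as the $\ell_1$ or relaxed $\ell_p$ penalties.
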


\begin{proof} (i) The updates of $\mathcal{S}$ and $\mathcal{Z}$ have the form of~\eqref{eq:updatehatx}. Then Lemma~\ref{thm:weakstrong} implies that
\begin{align*}
    &F(\mathcal{Z}^{k-1},E^{k-1},\mathcal{S}^{k-1})-F(\mathcal{Z}^{k-1},E^{k-1},\mathcal{S}^{k})
    \ge\frac{\alpha_{\mathcal{S}}+(\alpha_{\mathcal{S}}+\delta-\rho_1)_+}{2}\|\mathcal{S}^{k}-\mathcal{S}^{k-1}\|^{2}_{F}
\end{align*}
and 
\begin{align*}
    &F(\mathcal{Z}^{k-1},E^{k},\mathcal{S}^{k})-F(\mathcal{Z}^{k},E^{k},\mathcal{S}^{k})
    \ge\frac{\alpha_{\mathcal{Z}}+(\alpha_{\mathcal{Z}}+\delta-\rho_2)_+}{2}\|\mathcal{Z}^{k}-\mathcal{Z}^{k-1}\|^{2}_{F}.
\end{align*}

Next, it follows from \eqref{eq:XAB} and \eqref{Eq:Htilde}, we have
\begin{align*}
    &F(\mathcal{Z}^{k-1},E^{k-1},\mathcal{S}^{k})-F(\mathcal{Z}^{k-1},E^{k},\mathcal{S}^{k})\\
    =&\widetilde{H}(\mathcal{Z}^{k-1},E^{k-1},\mathcal{S}^{k})-\widetilde{H}(\mathcal{Z}^{k-1},E^{k},\mathcal{S}^{k})\\
    \geq &\frac{\alpha_{E}}{2}\|E^k-E^{k-1}\|_F^2.
\end{align*}
Combining the inequalities above, we  obtain \eqref{Eq:Decrease} with $c_1 =\min\{\alpha_{\mathcal{S}}+(\alpha_{\mathcal{S}}+\delta-\rho_1)_+,\alpha_{E},\alpha_{\mathcal{Z}}+(\alpha_{\mathcal{Z}}+\delta-\rho_2)_+\}$.

(ii) Since $(E^k)^{\top}E^k={I}_{r}$, we have that the sequence $\{E^k\}$ is bounded.
By (i), we have $F(\mathcal{Z}^k,E^{k},\mathcal{S}^{k})\le F(\mathcal{Z}^0,E^0,\mathcal{S}^0)$. Also, we observe that $F(\mathcal{Z}^k,E^{k},\mathcal{S}^{k})\ge \tau \|\mathcal{S}^k\|_{2,\psi}+\Phi_{\Sigma}(\mathcal{Z}^k)\geq 0$. Since both $\|\cdot\|_{2,\psi}$ and $\Phi_{\Sigma}$ are coercive, that is,
\begin{displaymath}
    \lim_{\|\mathcal{S}\|_F\to \infty}\|\mathcal{S}\|_{2,\psi}=\infty \quad\mbox{ and }\quad \lim_{ \|\mathcal{Z}\|_F\to \infty} \Phi_{\Sigma}(\mathcal{Z})=\infty,
\end{displaymath}
we have that the sequences $\{\mathcal{S}^k\}$ and $\{\mathcal{Z}^k\}$ are bounded. 

(iii) Let $K$ be an arbitrary integer. Summing \eqref{Eq:Decrease} from $k=1$ to $K-1$, we have
 \begin{align*}
 &\sum^{K}_{k=1}\left(\|\mathcal{S}^{k}-\mathcal{S}^{k-1}\|^{2}_{F}+\|E^k-E^{k-1}\|_F^2+ \|\mathcal{Z}^{k}-\mathcal{Z}^{k-1}\|^{2}_{F}\right)\\
 \le& \frac{2}{c_1}\left(F(\mathcal{Z}^0,E^0,\mathcal{S}^0)-
 F(\mathcal{Z}^K,E^K,\mathcal{S}^K)\right)\\
 \le&\frac{2}{c_1}F(\mathcal{Z}^0,E^0,\mathcal{S}^0).
\end{align*}
 Taking the limits of both sides of the inequality as $K\rightarrow \infty$, we have $$\sum^{\infty}_{k=1}\left(\|\mathcal{S}^{k}-\mathcal{S}^{k-1}\|^{2}_{F}+\|E^k-E^{k-1}\|_F^2+ \|\mathcal{Z}^{k}-\mathcal{Z}^{k-1}\|^{2}_{F}\right)<\infty.$$ Then assertion (iii) immediately holds.
\end{proof}

\begin{corollary}~\label{Cor:L} Assume that assumptions (A1) and (A2) are satisfied.
Let $\{(\mathcal{Z}^k,E^k,\mathcal{S}^k)\}$ be the sequence generated by Algorithm~\ref{alg1} with $\alpha_{\mathcal{S}}+(\alpha_{\mathcal{S}}+\delta-\rho_1)_+>0$ and $\alpha_{\mathcal{Z}}+(\alpha_{\mathcal{Z}}+\delta-\rho_2)_+>0$.  Then $\displaystyle\lim_{k\to\infty}\|\mathcal{Z}^{k}\times_{3}E^k-\mathcal{Z}^{k-1}\times_{3}E^{k-1}\|_{F}=0$.
\end{corollary}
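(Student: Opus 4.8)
The plan is to reduce the claim to the two facts already established in Theorem~\ref{Thm:IterateSeq}: the boundedness of $\{\mathcal{Z}^k\}$ from part~(ii), and the vanishing of the successive differences $\|\mathcal{Z}^{k}-\mathcal{Z}^{k-1}\|_F$ and $\|E^{k}-E^{k-1}\|_F$ from part~(iii). First I would pass to the mode-$3$ unfolding via the identity $\mathcal{L}=\mathcal{Z}\times_{3} E$ if and only if $\mathcal{L}_{(3)}=E\mathcal{Z}_{(3)}$, so that the quantity to be estimated becomes $\|E^k\mathcal{Z}_{(3)}^{k}-E^{k-1}\mathcal{Z}_{(3)}^{k-1}\|_F$, to which the ordinary matrix norm inequalities apply.

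Next I would insert and subtract the cross term $E^k\mathcal{Z}_{(3)}^{k-1}$ and invoke the triangle inequality,
\[
\|E^k\mathcal{Z}_{(3)}^{k}-E^{k-1}\mathcal{Z}_{(3)}^{k-1}\|_F \le \|E^k(\mathcal{Z}_{(3)}^{k}-\mathcal{Z}_{(3)}^{k-1})\|_F + \|(E^k-E^{k-1})\mathcal{Z}_{(3)}^{k-1}\|_F,
\]
and bound the two summands separately. For the first term, the key observation is that $E^k$ lies on the Stiefel manifold, so $(E^k)^{\top}E^k=I_r$ and hence its operator (spectral) norm satisfies $\|E^k\|_{op}=1$; submultiplicativity then gives $\|E^k(\mathcal{Z}_{(3)}^{k}-\mathcal{Z}_{(3)}^{k-1})\|_F\le \|\mathcal{Z}_{(3)}^{k}-\mathcal{Z}_{(3)}^{k-1}\|_F=\|\mathcal{Z}^{k}-\mathcal{Z}^{k-1}\|_F$, which tends to $0$ by Theorem~\ref{Thm:IterateSeq}(iii). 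For the second term I would use $\|(E^k-E^{k-1})\mathcal{Z}_{(3)}^{k-1}\|_F\le \|E^k-E^{k-1}\|_F\,\|\mathcal{Z}_{(3)}^{k-1}\|_{op}\le M\,\|E^k-E^{k-1}\|_F$, where $M:=\sup_k\|\mathcal{Z}^k\|_F<\infty$ is the uniform bound furnished by Theorem~\ref{Thm:IterateSeq}(ii); since $\|\mathcal{Z}_{(3)}^{k-1}\|_{op}\le\|\mathcal{Z}_{(3)}^{k-1}\|_F=\|\mathcal{Z}^{k-1}\|_F\le M$, this summand also tends to $0$ by part~(iii).

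Combining the two estimates and letting $k\to\infty$ yields $\lim_{k\to\infty}\|\mathcal{Z}^{k}\times_{3}E^k-\mathcal{Z}^{k-1}\times_{3}E^{k-1}\|_{F}=0$, as claimed. There is no genuinely hard step here; the only points requiring care are (a) recording that the orthogonality constraint forces $\|E^k\|_{op}=1$, so that the $\mathcal{Z}$-difference term is controlled without any amplifying factor, and (b) invoking the boundedness of $\{\mathcal{Z}^k\}$ from part~(ii), which is precisely what keeps the $E$-difference term from leaking an unbounded factor when the product rule is applied.
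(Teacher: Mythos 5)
Your proposal is correct and follows essentially the same route as the paper: the same triangle-inequality splitting into $\|(\mathcal{Z}^{k}-\mathcal{Z}^{k-1})\times_{3}E^{k}\|_F$ and $\|\mathcal{Z}^{k-1}\times_{3}(E^k-E^{k-1})\|_F$, controlled by the orthogonality of $E^k$ and the boundedness of $\{\mathcal{Z}^k\}$ from Theorem~\ref{Thm:IterateSeq}(ii), and then Theorem~\ref{Thm:IterateSeq}(iii). Your constant for the first term ($\|E^k\|_{op}=1$, hence constant $1$) is in fact sharper than the paper's $\sqrt{r}$, but this makes no difference to the conclusion.
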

\begin{proof} We have
\begin{align}
    &\|\mathcal{Z}^{k}\times_{3}E^k-\mathcal{Z}^{k-1}\times_{3}E^{k-1}\|_F\nonumber\\
    \leq &\|(\mathcal{Z}^{k}-\mathcal{Z}^{k-1})\times_{3}E^{k}\|_F+ \|\mathcal{Z}^{k-1}\times_{3}(E^k-E^{k-1})\|_F\nonumber\\
    \leq &\sqrt{r}\|\mathcal{Z}^{k}-\mathcal{Z}^{k-1}\|_F+ c_2\|E^k-E^{k-1}\|_F, \label{eq:ZkEk}
\end{align}
where $c_2=\max_k \|\mathcal{Z}^k\|_F<\infty$ according to assertion (ii) in Theorem~\ref{Thm:IterateSeq}. Then by assertion (iii) in Theorem~\ref{Thm:IterateSeq}, the result holds.

 \end{proof}

Lastly, we show that every convergent subsequence converges to the first-order stationary point of problem \eqref{model:HSI2}.

\begin{theorem}\label{thm:main}
Assume that assumptions (A1) and (A2) are satisfied.
Let $\{(\mathcal{Z}^k,E^k,\mathcal{S}^k)\}$ be the sequence generated by Algorithm~\ref{alg1} with $\alpha_{\mathcal{S}}+(\alpha_{\mathcal{S}}+\delta-\rho_1)_+>0$ and $\alpha_{\mathcal{Z}}+(\alpha_{\mathcal{Z}}+\delta-\rho_2)_+>0$.  Then every accumulation point of $\{(\mathcal{Z}^k,E^k,\mathcal{S}^k)\}$ is a first-order stationary point of problem \eqref{model:HSI2}.
\end{theorem}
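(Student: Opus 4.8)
The plan is to fix an arbitrary accumulation point $(\bar{\mathcal{Z}},\bar{E},\bar{\mathcal{S}})$ together with a subsequence $(\mathcal{Z}^{k_j},E^{k_j},\mathcal{S}^{k_j})\to(\bar{\mathcal{Z}},\bar{E},\bar{\mathcal{S}})$, and to verify the three stationarity conditions \eqref{eq:gradXH1}--\eqref{eq:gradXH3} by passing to the limit in the first-order optimality conditions of the subproblems \eqref{eq:PBCD-S}--\eqref{eq:PBCD-Z}. A preliminary observation used throughout is that, by Theorem~\ref{Thm:IterateSeq}(iii), the successive differences vanish, so the index-shifted iterates $\mathcal{S}^{k_j+1}$, $E^{k_j+1}$, $\mathcal{Z}^{k_j+1}$ also converge to $\bar{\mathcal{S}}$, $\bar{E}$, $\bar{\mathcal{Z}}$. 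The engine for each block is the outer semicontinuity of the limiting subdifferential recorded after its definition: if $x^k\to x$, $f(x^k)\to f(x)$, $u^k\to u$ and $u^k\in\partial f(x^k)$, then $u\in\partial f(x)$. Hence for each block I must exhibit a subgradient sequence converging to the right limit and, crucially, establish convergence of the associated function values.

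For the $E$-block I would start from the Riemannian first-order condition for \eqref{Eq:Htilde}: since $E^{k_j+1}$ minimizes $\widetilde{H}(\mathcal{Z}^{k_j},\cdot,\mathcal{S}^{k_j+1})+\frac{\alpha_E}{2}\|\cdot-E^{k_j}\|_F^2$ over $\mathbb{S}_{n_3,r}$, projecting its Euclidean gradient onto the tangent space yields
\[
\operatorname{grad}_E \widetilde{H}(\mathcal{Z}^{k_j},E^{k_j+1},\mathcal{S}^{k_j+1})=-\alpha_E\operatorname{Proj}_{\mathcal{T}_{E^{k_j+1}}\mathbb{S}_{n_3,r}}(E^{k_j+1}-E^{k_j}).
\]
By Proposition~\ref{thm:gradH} the left-hand side equals $\operatorname{grad}_E H(\mathcal{Z}^{k_j},E^{k_j+1},\mathcal{S}^{k_j+1})$, which is continuous in its arguments and therefore tends to $\operatorname{grad}_E H(\bar{\mathcal{Z}},\bar{E},\bar{\mathcal{S}})$. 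The right-hand side is bounded in norm by $\alpha_E C\|E^{k_j+1}-E^{k_j}\|_F$ (the projection being linear and bounded on the compact manifold) and hence vanishes. Together with closedness of $\mathbb{S}_{n_3,r}$ this gives \eqref{eq:gradXH2}.

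For the $\mathcal{S}$-block, Fermat's rule for \eqref{eq:PBCD-S} gives $-\nabla_{\mathcal{S}}H(\mathcal{Z}^{k_j},E^{k_j},\mathcal{S}^{k_j+1})-\alpha_{\mathcal{S}}(\mathcal{S}^{k_j+1}-\mathcal{S}^{k_j})\in\tau\partial\|\cdot\|_{2,\psi}(\mathcal{S}^{k_j+1})$. Here $\nabla_{\mathcal{S}}H(\mathcal{Z},E,\mathcal{S})=\delta(\mathcal{Z}\times_3 E+\mathcal{S}-\mathcal{O})$ is continuous and the proximal residual tends to zero, so the left-hand side converges to $-\nabla_{\mathcal{S}}H(\bar{\mathcal{Z}},\bar{E},\bar{\mathcal{S}})$. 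Since $\tau\|\cdot\|_{2,\psi}$ is finite-valued and $\rho_1$-weakly convex it is continuous, so the values $\tau\|\mathcal{S}^{k_j+1}\|_{2,\psi}$ converge; outer semicontinuity then delivers \eqref{eq:gradXH1}.

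The $\mathcal{Z}$-block is where the real work lies, and I expect the function-value convergence of $\Phi_{\Sigma}$ to be the main obstacle: unlike $\|\cdot\|_{2,\psi}$, the potential $\Phi_{\Sigma}$ is extended-valued (being built from the proximal-denoiser potentials of Proposition~\ref{thm:prox}), so continuity is unavailable and $\bar{\mathcal{Z}}$ is not a priori in its domain. Fermat's rule for \eqref{eq:PBCD-Z} produces a subgradient $-\nabla_{\mathcal{Z}}H(\mathcal{Z}^{k_j+1},E^{k_j+1},\mathcal{S}^{k_j+1})-\alpha_{\mathcal{Z}}(\mathcal{Z}^{k_j+1}-\mathcal{Z}^{k_j})\in\partial\Phi_{\Sigma}(\mathcal{Z}^{k_j+1})$, whose left-hand side converges to $-\nabla_{\mathcal{Z}}H(\bar{\mathcal{Z}},\bar{E},\bar{\mathcal{S}})$. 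To establish $\Phi_{\Sigma}(\mathcal{Z}^{k_j+1})\to\Phi_{\Sigma}(\bar{\mathcal{Z}})$ I would argue in two parts. Lower semicontinuity of $\Phi_{\Sigma}$ gives $\Phi_{\Sigma}(\bar{\mathcal{Z}})\le\liminf_j\Phi_{\Sigma}(\mathcal{Z}^{k_j+1})$, and since $\Phi_{\Sigma}(\mathcal{Z}^{k_j+1})\le F(\mathcal{Z}^{k_j+1},E^{k_j+1},\mathcal{S}^{k_j+1})\le F(\mathcal{Z}^0,E^0,\mathcal{S}^0)$ by the monotone decrease in Theorem~\ref{Thm:IterateSeq}(i), these values are uniformly bounded above, so $\bar{\mathcal{Z}}\in\operatorname{dom}\Phi_{\Sigma}$. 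For the reverse inequality I would use that $\mathcal{Z}^{k_j+1}$ is a global minimizer of its subproblem and compare its objective value against the fixed feasible competitor $\bar{\mathcal{Z}}$; as the smooth quadratic part converges uniformly on the bounded iterate set, the comparison forces $\limsup_j\Phi_{\Sigma}(\mathcal{Z}^{k_j+1})\le\Phi_{\Sigma}(\bar{\mathcal{Z}})$. Combining the two inequalities yields the value convergence, and outer semicontinuity of $\partial\Phi_{\Sigma}$ then gives \eqref{eq:gradXH3}, completing the proof.
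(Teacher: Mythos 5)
Your proposal is correct and follows essentially the same route as the paper: pass to the limit in the blockwise optimality conditions, control the residuals via Theorem~\ref{Thm:IterateSeq}(iii) and Proposition~\ref{thm:gradH}, and obtain the subdifferential inclusions from the closedness of the limiting subdifferential after establishing convergence of the function values. The only divergence is cosmetic: you dispatch the $\mathcal{S}$-block value convergence by continuity of the finite-valued weakly convex $\|\cdot\|_{2,\psi}$ (valid) and reserve the lower-semicontinuity-plus-subproblem-minimality argument for $\Phi_{\Sigma}$, whereas the paper spells that argument out for the $\mathcal{S}$-block and invokes it ``similarly'' for $\mathcal{Z}$ --- you correctly identify that the extended-valued $\Phi_{\Sigma}$ is where that argument is actually indispensable.
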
 
\begin{proof} By the updates of $\mathcal{S}^{k}$, $E^k$ and $\mathcal{Z}^{k}$ given in~\eqref{eq:PBCD-S}, \eqref{eq:PBCD-E}, and \eqref{eq:PBCD-Z}, respectively, we have for any $k=1,2,\dots$
\begin{align*}
&0\in \nabla_{\mathcal{S}} H(\mathcal{Z}^{k-1},E^{k-1},\mathcal{S}^{k}) + \tau\partial \|\cdot\|_{2,\psi}(\mathcal{S}^{k})+\alpha_{\mathcal{S}}(\mathcal{S}^{k}-\mathcal{S}^{k-1}),\\
&0 = \operatorname{grad}_{E} \widetilde{H}(\mathcal{Z}^{k-1},E^{k},\mathcal{S}^{k}),\quad (E^{k})^{\top}E^{k}=I_r,\\
&{0}\in \nabla_{\mathcal{Z}} H(\mathcal{Z}^{k},E^{k},\mathcal{S}^{k})+\partial\Phi_{\Sigma}(\mathcal{Z}^{k})+\alpha_{\mathcal{Z}}(\mathcal{Z}^{k}-\mathcal{Z}^{k-1}).
\end{align*}
Then we have
\begin{subequations} 
\begin{align}
    A_{\mathcal{S}}^k:&=-\alpha_{\mathcal{S}}(\mathcal{S}^{k}-\mathcal{S}^{k-1}) +\nabla_{\mathcal{S}} H(\mathcal{Z}^{k},E^{k},\mathcal{S}^{k})-\nabla_{\mathcal{S}} H(\mathcal{Z}^{k-1},E^{k-1},\mathcal{S}^{k})\label{eq:ASk}\\
    &\in \nabla_{\mathcal{S}} H(\mathcal{Z}^{k},E^{k},\mathcal{S}^{k}) + \tau\partial \|\cdot\|_{2,\psi}(\mathcal{S}^{k})\nonumber\\
    A_E^k:&=\operatorname{grad}_{E} \widetilde{H}(\mathcal{Z}^{k},E^{k},\mathcal{S}^{k})-\operatorname{grad}_{E} \widetilde{H}(\mathcal{Z}^{k-1},E^{k},\mathcal{S}^{k})\label{eq:AEk}\\
    &=\operatorname{grad}_{E} H(\mathcal{Z}^{k},E^{k},\mathcal{S}^{k}),\nonumber
\end{align}
and
\begin{align}
    A_{\mathcal{Z}}^k:&=-\alpha_{\mathcal{Z}}(\mathcal{Z}^{k}-\mathcal{Z}^{k-1})\label{eq:AZk}\\
    &\in\nabla_{\mathcal{Z}} H(\mathcal{Z}^{k},E^{k},\mathcal{S}^{k})+\partial\Phi_{\Sigma}(\mathcal{Z}^{k}).\nonumber
\end{align}
\end{subequations}
Note that \eqref{eq:AEk} is obtained by Proposition~\ref{thm:gradH}. Furthermore, since $\nabla_{\mathcal{S}} H(\mathcal{Z},E,\mathcal{S})=\delta (\mathcal{Z}\times_3 E+\mathcal{S}-\mathcal{O})$, by~\eqref{eq:ZkEk} we have
\begin{align}
    \|A_{\mathcal{S}}^k\|_F&\leq \alpha_{\mathcal{S}}\|\mathcal{S}^{k}-\mathcal{S}^{k-1}\|_F+\delta\|\mathcal{Z}^{k}\times_{3}E^k-\mathcal{Z}^{k-1}\times_{3}E^{k-1}\|_F\nonumber\\
    &\leq \alpha_{\mathcal{S}}\|\mathcal{S}^{k}-\mathcal{S}^{k-1}\|_F+\delta\sqrt{r}\|\mathcal{Z}^{k}-\mathcal{Z}^{k-1}\|_F+ c_2\delta\|E^k-E^{k-1}\|_F.\label{eq:whyE}
\end{align}
Since $\operatorname{grad}_{E} \widetilde{H}(\mathcal{Z},E,\mathcal{S})=\operatorname{Proj}_{\mathcal{T}_{E}\mathbb{S}_{n_3,r}}\left(-\delta(\mathcal{O}-\mathcal{S})_{(3)}(\mathcal{Z}_{(3)})^{\top}\right)$, we have
\begin{align*}
    \|A_E^k\|_F&\leq \left\|\operatorname{Proj}_{\mathcal{T}_{E^k}\mathbb{S}_{n_3,r}}\left(-\delta(\mathcal{O}-\mathcal{S}^k)_{(3)}(\mathcal{Z}^k_{(3)}-\mathcal{Z}^{k-1}_{(3)})^{\top}\right)\right\|_F\\
    &\leq (1+r^2)\|\delta(\mathcal{O}-\mathcal{S}^k)_{(3)}(\mathcal{Z}^k_{(3)}-\mathcal{Z}^{k-1}_{(3)})^{\top}\|_F\\
    &\leq (1+r^2)\delta c_3\|\mathcal{Z}^{k}-\mathcal{Z}^{k-1}\|_F,
\end{align*}
where $c_3=\max_k \|\mathcal{O}-\mathcal{S}^k\|_F$. Also, we have
\begin{align*}
    \|A_{\mathcal{Z}}^k\|_F\leq \alpha_{\mathcal{Z}}\|\mathcal{Z}^{k}-\mathcal{Z}^{k-1}\|_F.
\end{align*}

Suppose that $\{(\mathcal{Z}^{k_l},E^{k_l},\mathcal{S}^{k_l})\}$ is a subsequence of $\{(\mathcal{Z}^k,E^k,\mathcal{S}^k)\}$ which converges to $(\bar{\mathcal{Z}},\bar{E},\bar{\mathcal{S}})$ as $l\to\infty$.  It immediately follows from Theorem~\ref{Thm:IterateSeq} (iii) that $A_{\mathcal{S}}^{k_l}\to 0$, $A_{E}^{k_l}\to 0$, and $A_{\mathcal{Z}}^{k_l}\to 0$, as $l\to\infty$. Also, due to the continuity of $\nabla_{\mathcal{S}} H$, $\operatorname{grad}_{E} H$ and $\nabla_{\mathcal{Z}} H$, we have 
\begin{align*}
    \nabla_{\mathcal{S}} H(\mathcal{Z}^{k_l},E^{k_l},\mathcal{S}^{k_l})&\to \nabla_{\mathcal{S}} H(\bar{\mathcal{Z}},\bar{E},\bar{\mathcal{S}})\\
    \operatorname{grad}_{E} H(\mathcal{Z}^{k_l},E^{k_l},\mathcal{S}^{k_l})&\to \operatorname{grad}_{E} H(\bar{\mathcal{Z}},\bar{E},\bar{\mathcal{S}})\\
    \nabla_{\mathcal{Z}} H(\mathcal{Z}^{k_l},E^{k_l},\mathcal{S}^{k_l})&\to \nabla_{\mathcal{Z}} H(\bar{\mathcal{Z}},\bar{E},\bar{\mathcal{S}}),
\end{align*}
as $l\to\infty$. Then $(\bar{\mathcal{Z}},\bar{E},\bar{\mathcal{S}})$ satisfies \eqref{eq:gradXH2}  for the first-order optimality condition of problem \eqref{model:HSI2}. 

Next, we show $(\bar{\mathcal{Z}},\bar{E},\bar{\mathcal{S}})$ satisfies \eqref{eq:gradXH1} and \eqref{eq:gradXH3}. Since $\|\cdot\|_{2,\psi}$ is lower semicontinuous, we have
\begin{align*}
    \underset{l\to\infty}{\lim\inf}\; \|\mathcal{S}^{k_l}\|_{2,\psi}\geq \|\bar{\mathcal{S}}\|_{2,\psi}.
\end{align*}
Then according to~\eqref{eq:PBCD-S}, we have
\begin{align*}
&H(\mathcal{Z}^{k_l-1},E^{k_l-1},\mathcal{S}^{k_l})+\tau\|\mathcal{S}^{k_l}\|_{2,\psi}+\frac{\alpha_{\mathcal{S}}}{2}\|\mathcal{S}^{k_l}-\mathcal{S}^{k_l-1}\|_F^2\\
\leq &    H(\mathcal{Z}^{k_l-1},E^{k_l-1},\bar{\mathcal{S}})+\tau\|\bar{\mathcal{S}}\|_{2,\psi}+\frac{\alpha_{\mathcal{S}}}{2}\|\bar{\mathcal{S}}-\mathcal{S}^{k_l-1}\|_F^2.
\end{align*}
The continuity of $H$ implies that $\lim_{l\to\infty} H(\mathcal{Z}^{k_l-1},E^{k_l-1},\bar{\mathcal{S}}) -H(\mathcal{Z}^{k_l-1},E^{k_l-1},\bar{\mathcal{S}})=0$. By rewriting the above inequality and taking the limit superior of both sides as $l\to\infty$, we have
\begin{align*}
    \underset{l\to\infty}{\lim\sup}\; \|\mathcal{S}^{k_l}\|_{2,\psi} \leq \|\bar{\mathcal{S}}\|_{2,\psi}.
\end{align*}
Recall the definition of (limiting) subdifferential. Since we have $\mathcal{S}^{k_l}\to\bar{\mathcal{S}}$, $\|\mathcal{S}^{k_l}\|_{2,\psi} \to \|\bar{\mathcal{S}}\|_{2,\psi}$, $A_{\mathcal{S}}^{k_l}-\nabla_{\mathcal{S}} H(\mathcal{Z}^{k_l},E^{k_l},\mathcal{S}^{k_l})\in \tau \partial \|\cdot\|_{2,\psi}(\mathcal{S}^{k_l})$ and $A_{\mathcal{S}}^{k_l}-\nabla_{\mathcal{S}} H(\mathcal{Z}^{k_l},E^{k_l},\mathcal{S}^{k_l})\to \nabla_{\mathcal{S}} H(\bar{\mathcal{Z}},\bar{E},\bar{\mathcal{S}})$, as $l\to\infty$, we have~\eqref{eq:gradXH1} is satisfied. 

Similarly, we can show \eqref{eq:gradXH3} is satisfied.
\end{proof}

\begin{remark} The parameter $\alpha_E$ can be chosen as $0$, only if $(\mathcal{O}-\bar{\mathcal{S}})_{(3)}(\bar{\mathcal{Z}}_{(3)})^{\top}$ is full-rank. In this case, we can achieve $\displaystyle\lim_{k\to\infty} \|E^k-E^{k-1} \|_{F}=0$ using~\eqref{eq:Eupdate}, $\displaystyle\lim_{k\to\infty} \|\mathcal{Z}^k-\mathcal{Z}^{k-1} \|_{F}=0$, and $\displaystyle\lim_{k\to\infty} \|\mathcal{S}^k-\mathcal{S}^{k-1} \|_{F}=0$, without using  $\alpha_E>0$.
\end{remark}


\section{Numerical Experiments}\label{sec:experiments}

In this section, we conduct numerical experiments for anomaly detection in noisy HSIs. We compare the proposed method with RX~\cite{reed1990adaptive}, LRASR~\cite{xu2015anomaly}, LSCTV~\cite{feng2021local}, TLRSR~\cite{wang2022learning}, Auto-AD~\cite{wang2021auto} methods. All the numerical experiments are executed on a personal desktop with an Intel Core i7 9750H at 2.60 GHz with 16 GB RAM.  

We test the proposed anomaly detection method and the competing methods on four HSIs from the ``airport-beach-urban" dataset~\cite{kang2017hyperspectral}. In this dataset, items such as boats, cars, and airplanes are labeled as anomalous objects. The details of the test HSIs are shown as follows:

\begin{itemize}
    \item ``Airport" was captured by the airborne visible/infrared imaging spectrometer (AVIRIS)~\cite{green1998imaging} sensor over Los Angeles, with a spatial resolution of $7.1$~m. The HSI size is $100 \times 100 \times 205$. 

    \item ``Beach" was captured by the ROSIS-03 sensor in Pavia, with a spatial resolution of $1.3$~m. The HSI size is $150 \times 150 \times 102$. 

    \item ``Urban 1" and ``Urban 2" were captured by the AVIRIS over Los Angeles, with a spatial resolution of $7.1$~m. The HSIs are of size $100 \times 100 \times 205$. 

\end{itemize}
We test all the methods for anomaly detection on the original HSIs and on simulated noisy HSIs degraded by Gaussian noise with a noise level of $0.03$.

\subsection{Comparison of experimental results}

To evaluate the accuracy of object detection, we plot the receiver operating characteristic (ROC) curve~\cite{kerekes2008receiver} and calculate the area under the ROC curve (AUC)~\cite{khazai2011anomaly}. The ROC curve plots the probability of detection vs the false alarm rate for various possible thresholds. The closer the ROC curve is to the upper-left corner and the larger AUC score, the better the detection performance. 

The average AUC scores and average computational time in seconds are presented in Table~\ref{table:AUC}. The ROC curves obtained from HSIs with no noise and with a noise level of $0.03$ are presented in Figure~\ref{fig:ROC1} and Figure~\ref{fig:ROC2}, respectively. In terms of computational time, as shown in Table~\ref{table:AUC}, the proposed PnP-PBCD method completes detection in an average of  less than 15 seconds, although the traditional RX method remains the fastest overall. Regarding detection accuracy, as also shown in Table~\ref{table:AUC}, when no noise is presented, the proposed PnP-PBCD method achieves the highest average AUC score. The other methods have lower average AUC scores due to poor performance in one or more scenes, as shown in Figure~\ref{fig:ROC1}. This issue becomes more visible when the observed HSI is degraded by noise. For example, the LRASR method performs competitively with our method in ``Urban 1" and ``Urban 2" as shown in Figure~\ref{fig:ROC2} (c) and (d). However, it performs poorly for ``Airport" and ``Beach" as shown in Figure~\ref{fig:ROC2} (a) and (b). Hence, the proposed PnP-PBCD method is more robust in anomaly detection across different test HSIs, even with noise.
\setlength{\tabcolsep}{4pt}
\begin{table}[h]
\centering
\caption{Comparison of average AUC scores and average computational time obtained by different methods.}
\begin{tabular}{c|c|cccccc} \hline \textbf{Noise} & \multirow{2}{*}{\textbf{Index}}   &  \multirow{2}{*}{\textbf{RX}}     &  \multirow{2}{*}{\textbf{LRASR}}    &  \multirow{2}{*}{\textbf{LSCTV}}   &  \multirow{2}{*}{\scalebox{0.9}[1]{\textbf{Auto-AD}}}    &  \multirow{2}{*}{\textbf{TLRSR}} & \scalebox{0.9}[1]{\textbf{PnP-PBCD}}\\ 
\textbf{level}&&&&&&& \textbf{(ours})\\\hline 
\multirow{2}{*}{$0$} 
&  Ave AUC & 0.9550 & 0.8702 & 0.9137 & 0.9437 & 0.9500 & {\bf 0.9663} \\
&   Ave Time  & {\bf 0.21} & 4.06 & 364.13 & 3.75 & 4.79 & 14.86\\\hline
\multirow{2}{*}{$0.03$}  
&   Ave AUC   & 0.8948 & 0.8831 & 0.9089 & 0.9478 & 0.7939 & {\bf 0.9607}  \\
&  Ave Time & {\bf 0.21} & 5.02 & 382.90 & 4.47 & 5.15 & 13.62 \\\hline
\end{tabular}
\label{table:AUC}
\end{table}

\begin{figure}[h]
\begin{minipage}{0.82\linewidth}
\begin{subfigure}[t]{0.48\linewidth}
    \includegraphics[width=\linewidth,trim={35 0 45 25},clip]{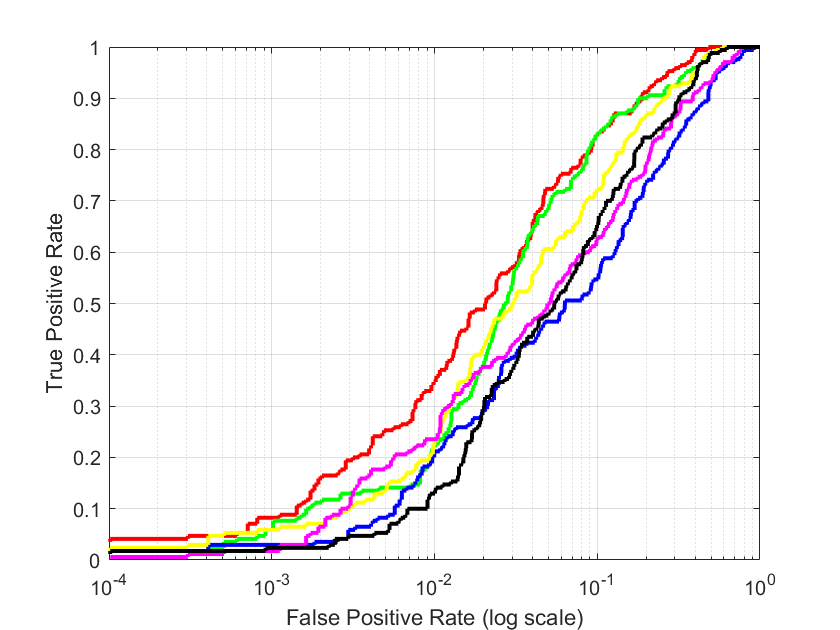}
    \caption{Airport}
    \end{subfigure}	
\begin{subfigure}[t]{0.48\linewidth}
    \includegraphics[width=\linewidth,trim={35 0 45 25},clip]{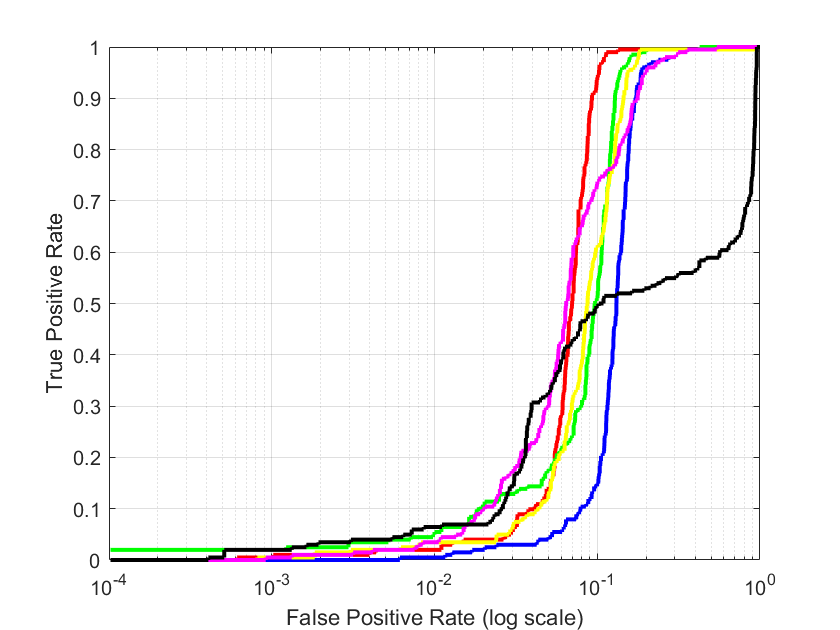}
    \caption{Beach}
    \end{subfigure}	
\end{minipage}	
\begin{minipage}[t]{0.15\linewidth}
    \begin{subfigure}[t]{\linewidth}
    \includegraphics[width=\linewidth]
    {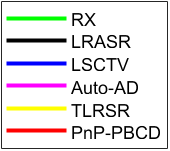}
    \end{subfigure}	
    \end{minipage}	

\begin{flushleft}\begin{minipage}{0.82\linewidth}
\begin{subfigure}[t]{0.48\linewidth}
    \includegraphics[width=\linewidth,trim={35 0 45 25},clip]{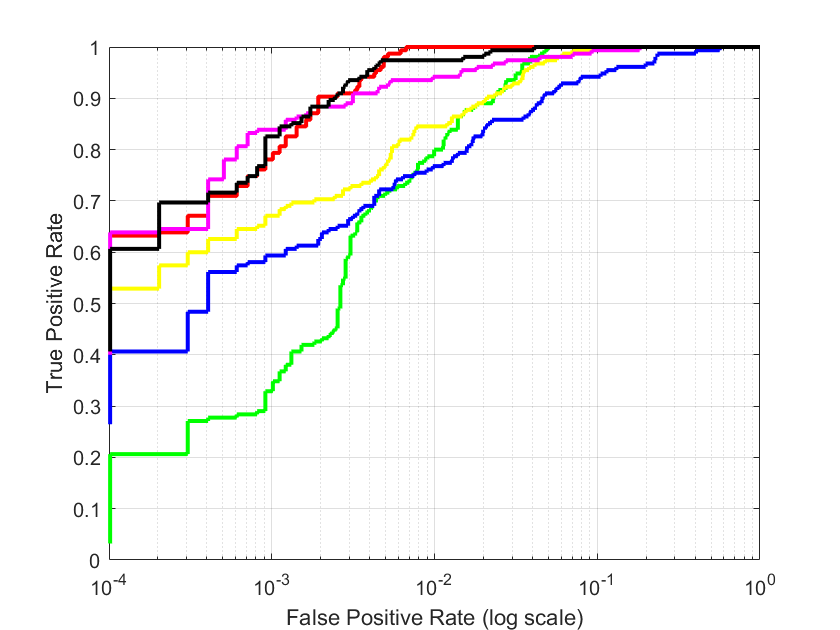}
    \caption{Urban 1}
    \end{subfigure}	
    \begin{subfigure}[t]{0.48\linewidth}
    \includegraphics[width=\linewidth,trim={35 0 45 25},clip]{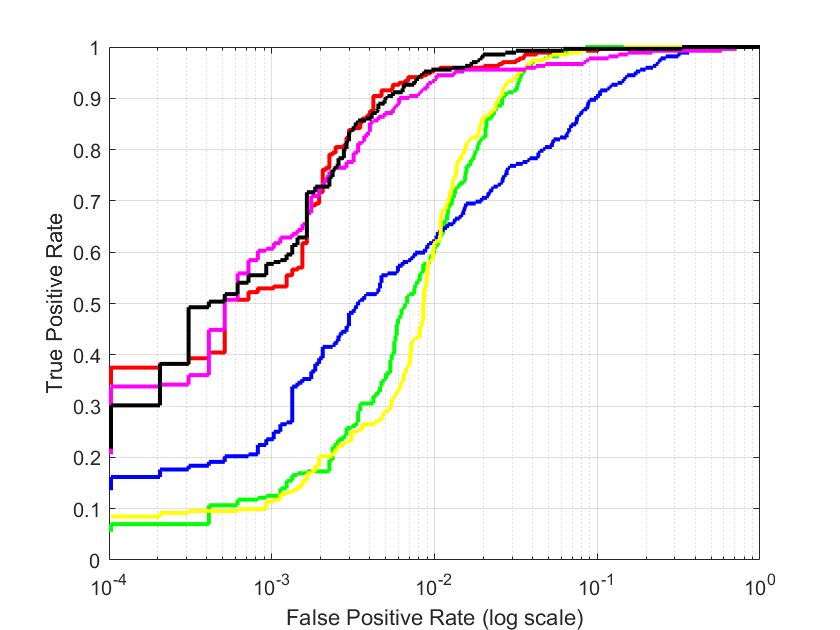}
    \caption{Urban 2}
    \end{subfigure}	
    \end{minipage}	
    \end{flushleft}
\caption{Comparison of ROC curves obtained by different methods from HSIs with no noise.}\label{fig:ROC1}
\end{figure}

\begin{figure}[h]
\begin{minipage}{0.82\linewidth}
\begin{subfigure}[t]{0.48\linewidth}
    \includegraphics[width=\linewidth,trim={35 0 45 25},clip]{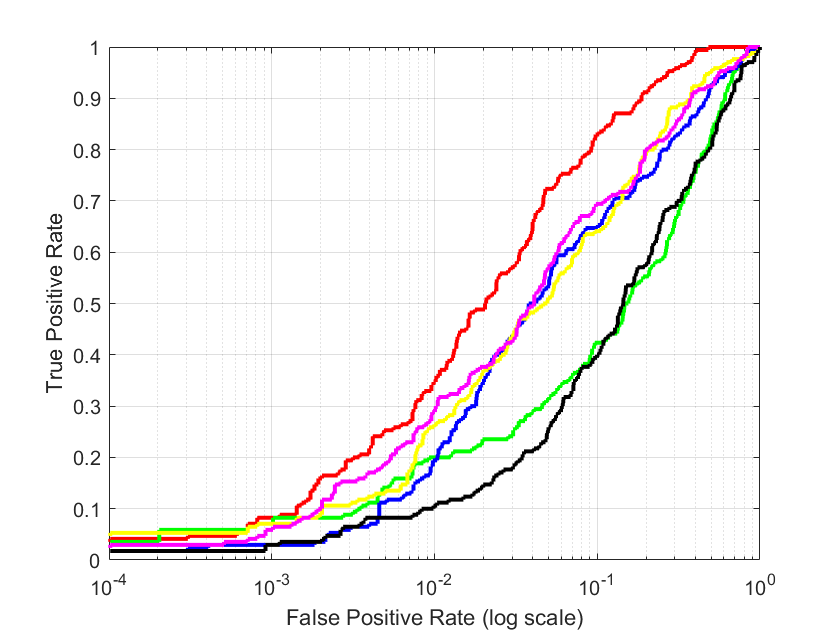}
    \caption{Airport}
    \end{subfigure}	
\begin{subfigure}[t]{0.48\linewidth}
    \includegraphics[width=\linewidth,trim={35 0 45 25},clip]{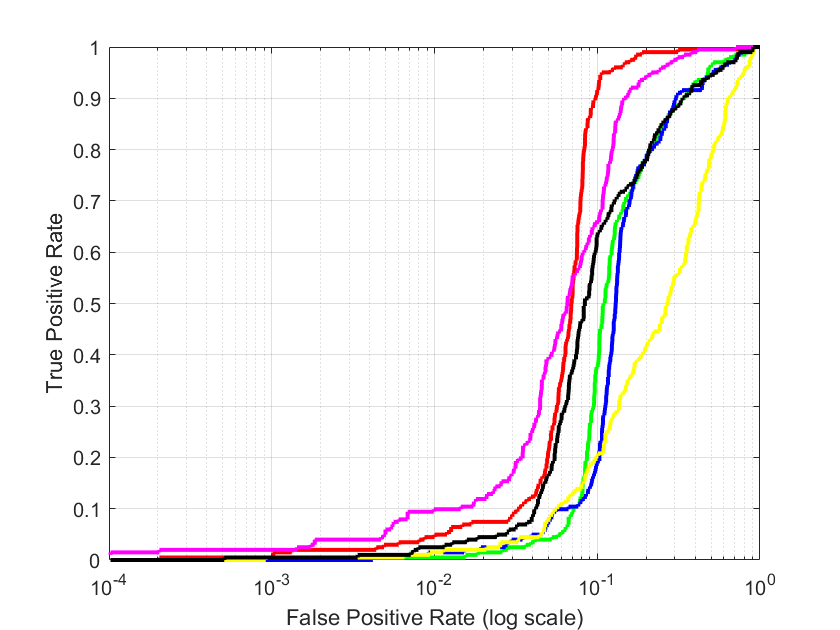}
    \caption{Beach}
    \end{subfigure}	
    \end{minipage}	
\begin{minipage}[t]{0.15\linewidth}
    \begin{subfigure}[t]{\linewidth}
    \includegraphics[width=\linewidth]
    {image/ROC/Legend2.PNG}
    \end{subfigure}	
    \end{minipage}	
    
\begin{flushleft}\begin{minipage}{0.82\linewidth}
\begin{subfigure}[t]{0.48\linewidth}
    \includegraphics[width=\linewidth,trim={35 0 45 25},clip]{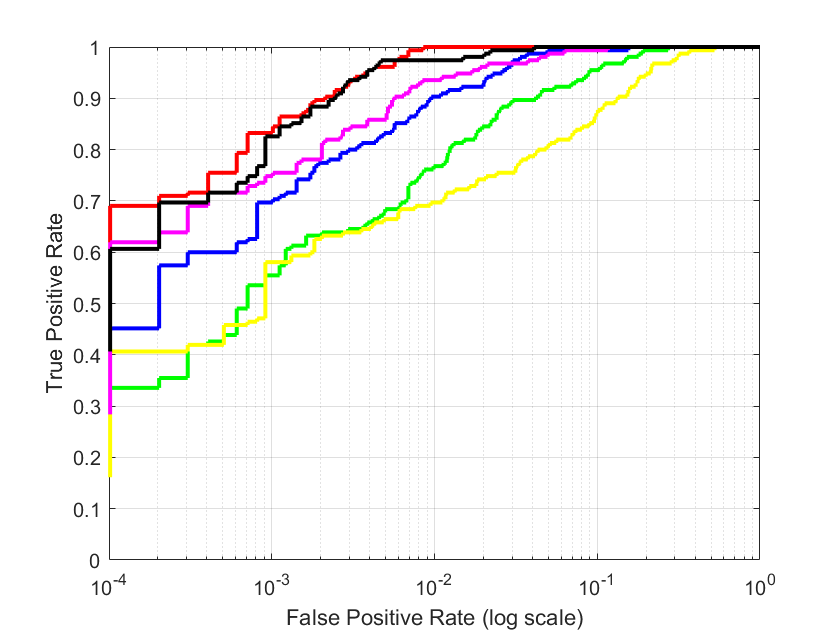}
    \caption{Urban 1}
    \end{subfigure}	
\begin{subfigure}[t]{0.48\linewidth}
    \includegraphics[width=\linewidth,trim={35 0 45 25},clip]{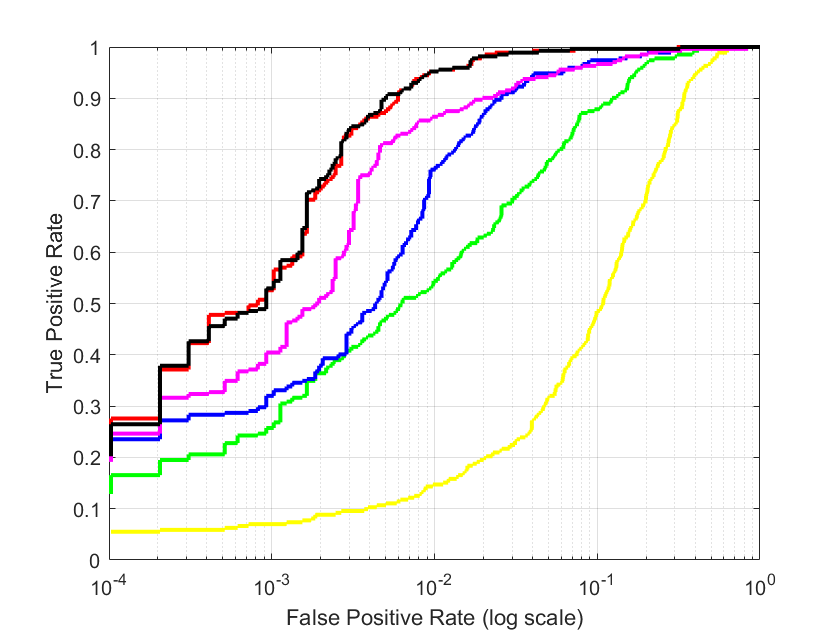}
    \caption{Urban 2}
    \end{subfigure}	
    \end{minipage}	
     \end{flushleft}
	\caption{Comparison of ROC curves obtained by different methods from HSIs with a noise level of $0.03$.}\label{fig:ROC2}
\end{figure}

For visual quality comparison, the anomalous objects detected by different methods are presented in Figures \ref{fig:Airport} and \ref{fig:Urban1} for observed HSIs with no noise. We observe that the LRASR, LSCTV, TLRSR, and proposed PnP-PBCD methods can detect rich patterns in complex scenes, as shown in Figure \ref{fig:Airport}, while the Auto-AD and proposed PnP-PBCD methods can ignore unnecessary objects in simpler scenes, as shown in Figure \ref{fig:Urban1}. Next, we present the anomalous objects detected by different methods in Figures \ref{fig:Beach} and \ref{fig:Urban2} for observed HSIs with a noise level of $0.03$. The noise significantly reduces the detection quality of the RX and TLRSR methods, as these methods also recognize noise as anomalies. Our proposed methods remain stable even with noise. Also, compared with the Auto-AD method, the proposed PnP-PBCD method achieves more complete detection and stronger density in Figures \ref{fig:Beach} and \ref{fig:Urban2}, indicating greater confidence in detecting anomalous objects.

All in all, the proposed PnP-PBCD method is an efficient method for anomaly detection, outperforming the competing methods in terms of detection accuracy and visual quality, especially in noisy HSIs.

\begin{figure}[h]
	\centering
	\begin{subfigure}[t]{0.242\linewidth}
    \includegraphics[width=\linewidth]{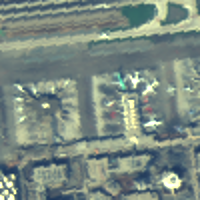}
    \caption{Observed HSI}
    \end{subfigure}	
    \begin{subfigure}[t]{0.242\linewidth}
    \includegraphics[width=\linewidth]{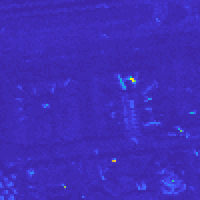}
    \caption{RX}
    \end{subfigure}
    \begin{subfigure}[t]{0.242\linewidth}
    \includegraphics[width=\linewidth]{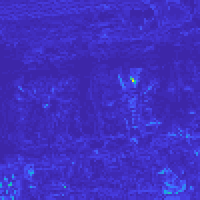}
    \caption{LRASR}
    \end{subfigure}
    \begin{subfigure}[t]{0.242\linewidth}
    \includegraphics[width=\linewidth]{image/Anomaly/abu-airport-3/000/abu-airport-3_LSCTV.png}
    \caption{LSCTV}
    \end{subfigure}
    \begin{subfigure}[t]{0.242\linewidth}
    \includegraphics[width=\linewidth]{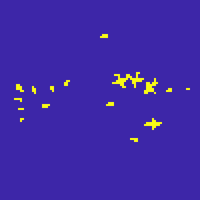}
    \caption{Ground truth}
    \end{subfigure}	
     \begin{subfigure}[t]{0.242\linewidth}
    \includegraphics[width=\linewidth]{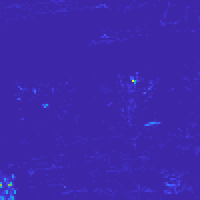}
    \caption{Auto-AD}
    \end{subfigure}
    \begin{subfigure}[t]{0.242\linewidth}
    \includegraphics[width=\linewidth]{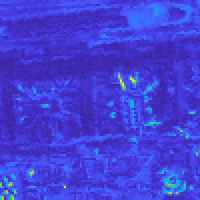}
    \caption{TLRSR}
    \end{subfigure}
    \begin{subfigure}[t]{0.242\linewidth}
    \includegraphics[width=\linewidth]{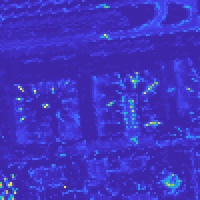}
    \caption{{\footnotesize PnP-PBCD (ours)}}
    \end{subfigure}
	\caption{Comparison of anomalous objects detected by different methods from ``Airport" with no noise.}\label{fig:Airport}
\end{figure}

\begin{figure}[h]
	\centering
	\begin{subfigure}[t]{0.242\linewidth}
    \includegraphics[width=\linewidth]{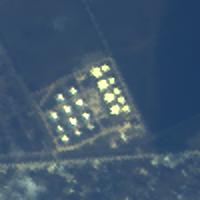}
    \caption{Observed HSI}
    \end{subfigure}	
    \begin{subfigure}[t]{0.242\linewidth}
    \includegraphics[width=\linewidth]{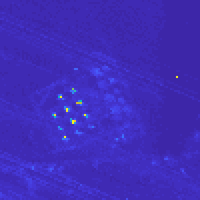}
    \caption{RX}
    \end{subfigure}
    \begin{subfigure}[t]{0.242\linewidth}
    \includegraphics[width=\linewidth]{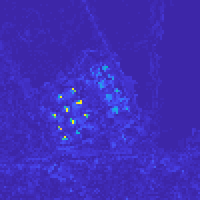}
    \caption{LRASR}
    \end{subfigure}
    \begin{subfigure}[t]{0.242\linewidth}
    \includegraphics[width=\linewidth]{image/Anomaly/abu-urban-2/000/abu-urban-2_LSCTV.png}
    \caption{LSCTV}
    \end{subfigure}
    \begin{subfigure}[t]{0.242\linewidth}
    \includegraphics[width=\linewidth]{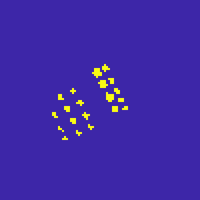}
    \caption{Ground truth}
    \end{subfigure}	
     \begin{subfigure}[t]{0.242\linewidth}
    \includegraphics[width=\linewidth]{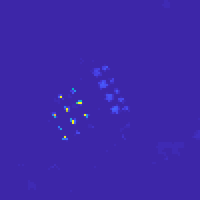}
    \caption{Auto-AD}
    \end{subfigure}
    \begin{subfigure}[t]{0.242\linewidth}
    \includegraphics[width=\linewidth]{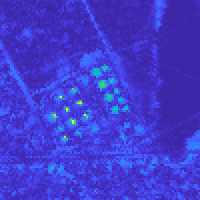}
    \caption{TLRSR}
    \end{subfigure}
    \begin{subfigure}[t]{0.242\linewidth}
    \includegraphics[width=\linewidth]{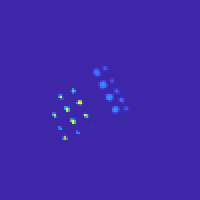}
    \caption{{\footnotesize PnP-PBCD (ours)}}
    \end{subfigure}
	\caption{Comparison of anomalous objects detected by different methods from ``Urban 1" with no noise.}\label{fig:Urban1}
\end{figure}

\begin{figure}[h]
	\centering
	\begin{subfigure}[t]{0.242\linewidth}
    \includegraphics[width=\linewidth]{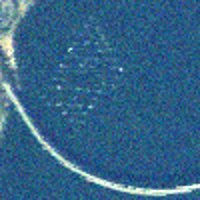}
    \caption{Observed HSI}
    \end{subfigure}	
    \begin{subfigure}[t]{0.242\linewidth}
    \includegraphics[width=\linewidth]{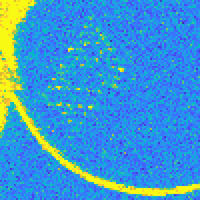}
    \caption{RX}
    \end{subfigure}
    \begin{subfigure}[t]{0.242\linewidth}
    \includegraphics[width=\linewidth]{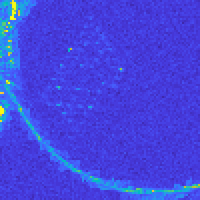}
    \caption{LRASR}
    \end{subfigure}
    \begin{subfigure}[t]{0.242\linewidth}
    \includegraphics[width=\linewidth]{image/Anomaly/abu-beach-2/003/abu-beach-2_LSCTV.png}
    \caption{LSCTV}
    \end{subfigure}
    \begin{subfigure}[t]{0.242\linewidth}
    \includegraphics[width=\linewidth]{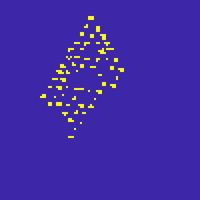}
    \caption{Ground truth}
    \end{subfigure}	
     \begin{subfigure}[t]{0.242\linewidth}
    \includegraphics[width=\linewidth]{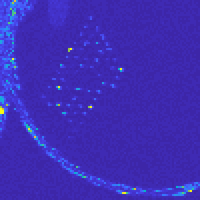}
    \caption{Auto-AD}
    \end{subfigure}
    \begin{subfigure}[t]{0.242\linewidth}
    \includegraphics[width=\linewidth]{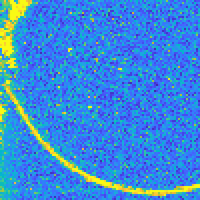}
    \caption{TLRSR}
    \end{subfigure}
    \begin{subfigure}[t]{0.242\linewidth}
    \includegraphics[width=\linewidth]{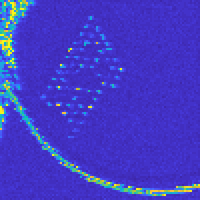}
    \caption{{\footnotesize PnP-PBCD (ours)}}
    \end{subfigure}
	\caption{Comparison of anomalous objects detected by different methods from ``Beach" with a noise level of $0.03$.}\label{fig:Beach}
\end{figure}

\begin{figure}[h]
	\centering
	\begin{subfigure}[t]{0.242\linewidth}
    \includegraphics[width=\linewidth]{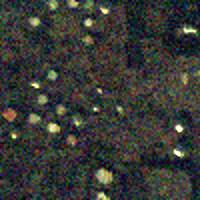}
    \caption{Observed HSI}
    \end{subfigure}	
    \begin{subfigure}[t]{0.242\linewidth}
    \includegraphics[width=\linewidth]{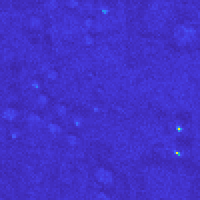}
    \caption{RX}
    \end{subfigure}
    \begin{subfigure}[t]{0.242\linewidth}
    \includegraphics[width=\linewidth]{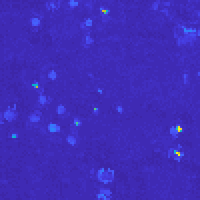}
    \caption{LRASR}
    \end{subfigure}
    \begin{subfigure}[t]{0.242\linewidth}
    \includegraphics[width=\linewidth]{image/Anomaly/abu-urban-4/003/abu-urban-4_LSCTV.png}
    \caption{LSCTV}
    \end{subfigure}
    \begin{subfigure}[t]{0.242\linewidth}
    \includegraphics[width=\linewidth]{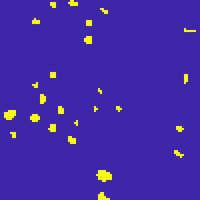}
    \caption{Ground truth}
    \end{subfigure}	
     \begin{subfigure}[t]{0.242\linewidth}
    \includegraphics[width=\linewidth]{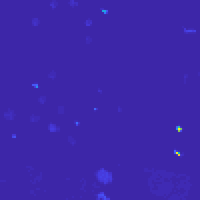}
    \caption{Auto-AD}
    \end{subfigure}
    \begin{subfigure}[t]{0.242\linewidth}
    \includegraphics[width=\linewidth]{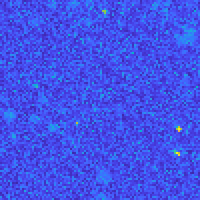}
    \caption{TLRSR}
    \end{subfigure}
    \begin{subfigure}[t]{0.242\linewidth}
    \includegraphics[width=\linewidth]{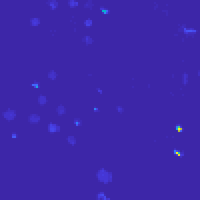}
    \caption{{\footnotesize PnP-PBCD (ours)}}
    \end{subfigure}
	\caption{Comparison of anomalous objects detected by different methods from ``Urban 2" with a noise level of $0.03$.}\label{fig:Urban2}
\end{figure}

\subsection{Parameter settings and analysis} 

For the proposed PnP-PBCD method, we set model parameters $\delta=0.25$, and $\tau =1$; and we select the relaxed $\ell_p$ norm with $p=0.1$ and $\varepsilon=10^{-5}$ as $\psi$ for $\|\cdot\|_{2,\psi}$ given in~\eqref{eq:L2pNorm}. We also set the parameters $a=0.2$, $b=0.4$, and $\gamma=0.99$ for the shifted and relaxed denoiser $\widetilde{\mathcal{D}}_{\sigma}^{\gamma}$ given in~\eqref{eq:denoiser}; and we set algorithm parameters  $\alpha_{\mathcal{S}}=\alpha_{E}=\alpha_{\mathcal{Z}}=0.01$. Algorithm~\ref{alg1} stops if $\|\mathcal{S}^{k+1}-\mathcal{S}^k\|_F/\|\mathcal{S}^{k}\|_F \leq 10^{-3}$.
	
Next, we conduct a parameter analysis on the model parameters $\delta$ and $\tau$. We use Urban 1 with a noise level of $0.03$ for testing. The plot of AUC values vs parameters $\delta$ and $\tau$ is presented in Fig.~\ref{fig:parameter}. The AUC values are similar when the ratio of $\delta$ to $\tau$ (i.e., $\frac{\delta}{\tau}$) is fixed, and the AUC achieves the best performance when this ratio is approximately $0.25$.

\begin{figure}[htbp]
	\centering
		\includegraphics[height=.35\linewidth]{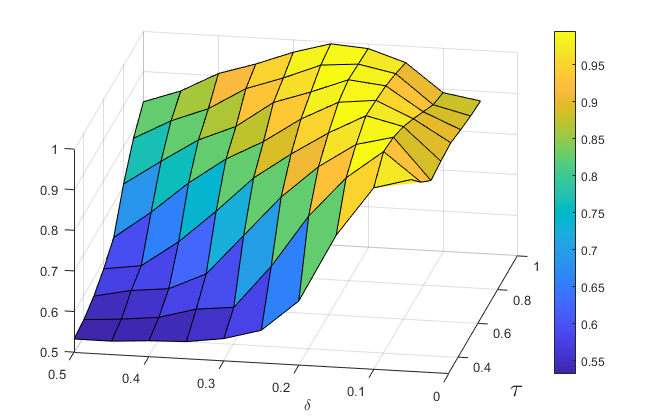}
	\caption{Plot of the AUC values vs parameters $\delta$ and $\gamma$.}\label{fig:parameter}
\end{figure}


\section{Conclusions}\label{sec:conclusions}

This paper presents a novel approach for hyperspectral anomaly detection by integrating a low-rank representation model with a deep learning-based denoiser within a PnP framework. Our method effectively addresses the challenges of noise contamination in hyperspectral images by employing a subspace representation for the background and utilizing a deep implicit prior to denoise the representation coefficients. The introduction of a generalized group sparsity measure, $\|\cdot\|_{2,\psi}$, enhances the detection of sparse anomalous objects. We developed a PnP-PBCD method to solve the resulting nonconvex nonsmooth optimization problem, ensuring that any accumulation point is a stationary point. Our experimental results demonstrate that the proposed PnP-PBCD method significantly outperforms existing state-of-the-art techniques, effectively detecting anomalous objects even in noisy conditions. 

\nocite{liu2024orthogonal,Gao2018A}

\section*{Acknowledgments}
This work was partially supported by the PolyU internal grant P0040271.

\bibliographystyle{siam}
\bibliography{bibfile}

\begin{thebibliography}{10}

\bibitem{2008Optimization}
{\sc P.~A. Absil, R.~Mahony, and R.~Sepulchre}, {\em Optimization on Matrix
  Manifolds}, Princeton University Press, 2008.

\bibitem{bioucas2008hyperspectral}
{\sc J.~M. Bioucas-Dias and J.~M. Nascimento}, {\em Hyperspectral subspace
  identification}, IEEE Transactions on Geoscience and Remote Sensing, 46
  (2008), pp.~2435--2445.

\bibitem{bohm2021variable}
{\sc A.~B{\"o}hm and S.~J. Wright}, {\em Variable smoothing for weakly convex
  composite functions}, J. Optim. Theory Appl., 188 (2021), pp.~628--649.

\bibitem{davis2019stochastic}
{\sc D.~Davis and D.~Drusvyatskiy}, {\em Stochastic model-based minimization of
  weakly convex functions}, SIAM Journal on Optimization, 29 (2019),
  pp.~207--239.

\bibitem{fan1997comments}
{\sc J.~Fan}, {\em Comments on wavelets in statistics: A review by a.
  antoniadis}, Journal of the Italian Statistical Society, 6 (1997), p.~131.

\bibitem{feng2023hyperspectral}
{\sc M.~Feng, W.~Chen, Y.~Yang, Q.~Shu, H.~Li, and Y.~Huang}, {\em
  Hyperspectral anomaly detection based on tensor ring decomposition with
  factors tv regularization}, IEEE Transactions on Geoscience and Remote
  Sensing, 61 (2023), pp.~1--14.

\bibitem{feng2021local}
{\sc R.~Feng, H.~Li, L.~Wang, Y.~Zhong, L.~Zhang, and T.~Zeng}, {\em Local
  spatial constraint and total variation for hyperspectral anomaly detection},
  IEEE Transactions on Geoscience and Remote Sensing, 60 (2021), pp.~1--16.

\bibitem{Gao2018A}
{\sc B.~Gao, X.~Liu, X.~Chen, and Y.-X. Yuan}, {\em A new first-order
  algorithmic framework for optimization problems with orthogonality
  constraints}, SIAM J. Optim., 28 (2018), pp.~302--332.

\bibitem{green1998imaging}
{\sc R.~O. Green, M.~L. Eastwood, C.~M. Sarture, T.~G. Chrien, M.~Aronsson,
  B.~J. Chippendale, J.~A. Faust, B.~E. Pavri, C.~J. Chovit, M.~Solis, et~al.},
  {\em Imaging spectroscopy and the airborne visible/infrared imaging
  spectrometer (aviris)}, Remote Sens. Environ., 65 (1998), pp.~227--248.

\bibitem{guo2014weighted}
{\sc Q.~Guo, B.~Zhang, Q.~Ran, L.~Gao, J.~Li, and A.~Plaza}, {\em Weighted-rxd
  and linear filter-based rxd: Improving background statistics estimation for
  anomaly detection in hyperspectral imagery}, IEEE Journal of Selected Topics
  in Applied Earth Observations and Remote Sensing, 7 (2014), pp.~2351--2366.

\bibitem{huang2024deep}
{\sc C.~Huang, Z.~Wu, Y.~Cheng, T.~Zeng, C.-B. Sch{\"o}nlieb, and A.~I.
  Aviles-Rivero}, {\em Deep block proximal linearised minimisation algorithm
  for non-convex inverse problems}, arXiv preprint arXiv:2406.02458,  (2024).

\bibitem{Hurault2023Relaxed}
{\sc S.~Hurault, A.~Chambolle, A.~Leclaire, and N.~Papadakis}, {\em A relaxed
  proximal gradient descent algorithm for convergent plug-and-play with
  proximal denoiser}, in SSVM, Cham, 2023, Springer International Publishing,
  pp.~379--392.

\bibitem{hurault2022proximal}
{\sc S.~Hurault, A.~Leclaire, and N.~Papadakis}, {\em Proximal denoiser for
  convergent plug-and-play optimization with nonconvex regularization}, in
  International Conference on Machine Learning, PMLR, 2022, pp.~9483--9505.

\bibitem{kang2017hyperspectral}
{\sc X.~Kang, X.~Zhang, S.~Li, K.~Li, J.~Li, and J.~A. Benediktsson}, {\em
  Hyperspectral anomaly detection with attribute and edge-preserving filters},
  IEEE Trans. Geosci. Remote Sens., 55 (2017), pp.~5600--5611.

\bibitem{kerekes2008receiver}
{\sc J.~Kerekes}, {\em Receiver operating characteristic curve confidence
  intervals and regions}, IEEE Geosci. Remote Sens. Lett., 5 (2008),
  pp.~251--255.

\bibitem{khazai2011anomaly}
{\sc S.~Khazai, S.~Homayouni, A.~Safari, and B.~Mojaradi}, {\em Anomaly
  detection in hyperspectral images based on an adaptive support vector
  method}, IEEE Geosci. Remote Sens. Lett., 8 (2011), pp.~646--650.

\bibitem{kwon2005kernel}
{\sc H.~Kwon and N.~M. Nasrabadi}, {\em Kernel rx-algorithm: A nonlinear
  anomaly detector for hyperspectral imagery}, IEEE transactions on Geoscience
  and Remote Sensing, 43 (2005), pp.~388--397.

\bibitem{li2015hyperspectral}
{\sc J.~Li, H.~Zhang, L.~Zhang, and L.~Ma}, {\em Hyperspectral anomaly
  detection by the use of background joint sparse representation}, IEEE Journal
  of Selected Topics in Applied Earth Observations and Remote Sensing, 8
  (2015), pp.~2523--2533.

\bibitem{liu2024orthogonal}
{\sc X.~Liu, S.~Yu, J.~Lu, and X.~Chen}, {\em Orthogonal constrained
  minimization with tensor $\ell_{2, p}$ regularization for hsi denoising and
  destriping}, arXiv preprint arXiv:2407.03605,  (2024).

\bibitem{molero2013analysis}
{\sc J.~M. Molero, E.~M. Garzon, I.~Garcia, and A.~Plaza}, {\em Analysis and
  optimizations of global and local versions of the rx algorithm for anomaly
  detection in hyperspectral data}, IEEE journal of selected topics in applied
  earth observations and remote sensing, 6 (2013), pp.~801--814.

\bibitem{Pol66}
{\sc B.~T. Polyak}, {\em Existence theorems and convergence of minimizing
  sequences for extremal problems with constraints}, Dokl. Akad. Nauk SSSR, 166
  (1966), pp.~287--290.

\bibitem{pour2021editorial}
{\sc A.~B. Pour, B.~Zoheir, B.~Pradhan, and M.~Hashim}, {\em Editorial for the
  special issue: Multispectral and hyperspectral remote sensing data for
  mineral exploration and environmental monitoring of mined areas}, 2021.

\bibitem{reed1990adaptive}
{\sc I.~S. Reed and X.~Yu}, {\em Adaptive multiple-band cfar detection of an
  optical pattern with unknown spectral distribution}, IEEE transactions on
  acoustics, speech, and signal processing, 38 (1990), pp.~1760--1770.

\bibitem{shen2019structured}
{\sc L.~Shen, B.~W. Suter, and E.~E. Tripp}, {\em Structured sparsity promoting
  functions}, Journal of Optimization Theory and Applications, 183 (2019),
  pp.~386--421.

\bibitem{Tan2024Provably}
{\sc H.~Y. Tan, S.~Mukherjee, J.~Tang, and C.-B. Sch\"{o}nlieb}, {\em Provably
  convergent plug-and-play quasi-{Newton} methods}, SIAM J. Imaging Sci., 17
  (2024), pp.~785--819.

\bibitem{wang2022learning}
{\sc M.~Wang, Q.~Wang, D.~Hong, S.~K. Roy, and J.~Chanussot}, {\em Learning
  tensor low-rank representation for hyperspectral anomaly detection}, IEEE
  Transactions on Cybernetics, 53 (2022), pp.~679--691.

\bibitem{wang2021auto}
{\sc S.~Wang, X.~Wang, L.~Zhang, and Y.~Zhong}, {\em Auto-ad: Autonomous
  hyperspectral anomaly detection network based on fully convolutional
  autoencoder}, IEEE Transactions on Geoscience and Remote Sensing, 60 (2021),
  pp.~1--14.

\bibitem{wu2013advanced}
{\sc D.~Wu and D.-W. Sun}, {\em Advanced applications of hyperspectral imaging
  technology for food quality and safety analysis and assessment: A
  review—part i: Fundamentals}, Innovative Food Science \& Emerging
  Technologies, 19 (2013), pp.~1--14.

\bibitem{wu2024extrapolated}
{\sc Z.~Wu, C.~Huang, and T.~Zeng}, {\em Extrapolated plug-and-play
  three-operator splitting methods for nonconvex optimization with applications
  to image restoration}, SIAM Journal on Imaging Sciences, 17 (2024),
  pp.~1145--1181.

\bibitem{xie2019spectral}
{\sc W.~Xie, J.~Lei, B.~Liu, Y.~Li, and X.~Jia}, {\em Spectral constraint
  adversarial autoencoders approach to feature representation in hyperspectral
  anomaly detection}, Neural Networks, 119 (2019), pp.~222--234.

\bibitem{xu2015anomaly}
{\sc Y.~Xu, Z.~Wu, J.~Li, A.~Plaza, and Z.~Wei}, {\em Anomaly detection in
  hyperspectral images based on low-rank and sparse representation}, IEEE
  Transactions on Geoscience and Remote Sensing, 54 (2015), pp.~1990--2000.

\bibitem{yu2024generalized}
{\sc Q.~Yu and M.~Bai}, {\em Generalized nonconvex hyperspectral anomaly
  detection via background representation learning with dictionary constraint},
  SIAM J. Imaging Sci., 17 (2024), pp.~917--950.

\bibitem{10.1214/09-AOS729}
{\sc C.-H. Zhang}, {\em {Nearly unbiased variable selection under minimax
  concave penalty}}, The Annals of Statistics, 38 (2010), pp.~894 -- 942.

\bibitem{zhang2021plug}
{\sc K.~Zhang, Y.~Li, W.~Zuo, L.~Zhang, L.~Van~Gool, and R.~Timofte}, {\em
  Plug-and-play image restoration with deep denoiser prior}, IEEE Trans.
  Pattern Anal. Mach. Intell., 44 (2021), pp.~6360--6376.

\bibitem{zhao2018spectral}
{\sc C.~Zhao and L.~Zhang}, {\em Spectral-spatial stacked autoencoders based on
  low-rank and sparse matrix decomposition for hyperspectral anomaly
  detection}, Infrared Physics \& Technology, 92 (2018), pp.~166--176.

\end{thebibliography}

\end{document}